\documentclass[11pt]{amsart}

\usepackage[left=1in, right=1in, top=1.2in, bottom=1.2in]{geometry}
\usepackage{microtype, mathtools, mathrsfs, enumerate, dsfont, esvect, tikz}
\setlength{\parskip}{.2em}

\usepackage{verbatim}
\usepackage[linktocpage]{hyperref}
\hypersetup{
    colorlinks=true,        
    linkcolor=red,          
    citecolor=blue,         
    filecolor=magenta,      
    urlcolor=cyan           
}

\usepackage{amssymb}
\usepackage{url}

\hyphenation{arch-i-med-e-an}


\newtheorem{theorem}{Theorem}[section]
\newtheorem{proposition}[theorem]{Proposition}

\newtheorem{lemma}[theorem]{Lemma}

\newtheorem{corollary}[theorem]{Corollary}

\newtheorem{convention}[theorem]{Convention}
\theoremstyle{definition}

\theoremstyle{plain}
\numberwithin{equation}{theorem}

\theoremstyle{remark}

\newtheorem{remark}[theorem]{Remark}

\newif\ifhascomments \hascommentstrue
\ifhascomments
  \newcommand{\dragos}[1]{{\color{red}[[\ensuremath{\bigstar\bigstar\bigstar} #1]]}}
  \newcommand{\matt}[1]{{\color{red}[[\ensuremath{\spadesuit\spadesuit\spadesuit} #1]]}}
\else
  \newcommand{\dragos}[1]{}
  \newcommand{\matt}[1]{}
\fi

\begin{document}

\title{On the growth of algebras, semigroups, and hereditary languages}

\author{Jason Bell}
\address{Jason Bell\\
University of Waterloo\\
Department of Pure Mathematics\\
200 University Avenue West\\
Waterloo, Ontario \  N2L 3G1\\
Canada}
\email{jpbell@uwaterloo.ca}

\author{Efim Zelmanov}
\address{University of California, San Diego\\
Department of Mathematics\\
9500 Gilman Dr.\\
La Jolla, CA 92093}
\email{ezelmanov@ucsd.edu}
\begin{abstract} 
We determine the possible functions that can occur, up to asymptotic equivalence, as growth functions of semigroups, hereditary languages, and algebras.  \end{abstract}

\subjclass[2010]{16P90, 20M25}

\keywords{Growth, associative algebras, Gelfand-Kirillov dimension, asymptotic equivalence}

\thanks{The first-named author was partially supported by NSERC grant RGPIN-2016-03632.}

\maketitle

\section{Introduction}
Let $S$ be a semigroup generated by a finite subset $X$. Consider the function $d_X(n)$, defined to be the number of distinct elements in $\bigcup_{k=1}^n X^k$.  Then $d_X(n)$ is a weakly increasing function and this is called the \emph{growth function} of $S$ with respect to the generating set $X$. If $Y\subseteq S$ is another finite generating subset then since $X^c\supseteq Y$ and $Y^c\supseteq X$ for some $c\ge 1$, we get the inequalities $$d_X(n)\le d_Y(cn)\qquad {\rm and}\qquad d_Y(n)\le d_X(cn).$$ In light of this fact, it is more natural to only consider functions up to asymptotic equivalence. 

Given two weakly increasing functions $f, g:\mathbb{N}\to [1,\infty)$, we say that $f$ is \emph{asymptotically greater than or equal to} $g$ (written $f\succeq g$ or $g\preceq f$), if there is a positive integer $C$ such that $g(n)\le f(Cn)$ for all $n$.  If $g\preceq f$ and $f\preceq g$ then we say that the functions $f$ and $g$ are \emph{asymptotically equivalent} ($f\sim g$).  If $X$ and $Y$ are finite generating subsets of $S$ then $d_X(n)\sim d_Y(n)$.  Thus, regardless of choice of generating set for $S$, the growth function lies in a fixed equivalence class and hence we may speak unambiguously of the growth function of $S$.

Let $X$ be a finite alphabet, let $X^*$ be the set of all words in $X$, and let $W\subseteq X^*$. The hereditary language $L_W(X)$ is defined as the set of all words in $X$ that do not contain subwords lying in $W$.  The function $d(n)$ that counts all words in $L_W(X)$ of length $\le n$ is called the \emph{growth function} of $L_W(X)$ (see \cite{GS81,Gr95}).  The language $L_W(X)$ can be identified with the set of all nonzero elements in the monomial semigroup with zero $\langle X | W\rangle$.  

For algebras, one can produce analogous functions as follows. If $A$ is a finitely generated algebra over a field $k$ and $V$ is a finite-dimensional $k$-vector space that generates $A$ as a $k$-algebra, then one can produce a function $d_V(n)={\rm dim}_k(V^n)$, where $V^n$ is the subspace of $A$ formed by taking the span of all $m$-fold products of in $V$, with $1\le m\le n$. As above, for any two finite-dimensional generating subspaces $V$ and $W$ of the algebra $A$, we have $d_V(n)\sim d_W(n)$.  

A basic question in the theories of the above classes is: \emph{which functions can be realized as growth functions?} We start with the following two necessary conditions for a growth function $f(n)$:
\begin{enumerate}
\item it is weakly increasing;
\item it is \emph{submultiplicative}, i.e., $f(m+n)\le f(m)f(n)$ for all $m,n$.
\end{enumerate}
In the case of groups, Gromov's work \cite{Gro}, combined with works of Bass \cite{Bass} and Guivarc'h \cite{Guivarch}, shows that a polynomially bounded growth function is asymptotically equivalent to $n^d$ for some nonnegative integer $d$. Grigorchuk \cite{Gri} gave the first example of a group whose growth is superpolynomial but subexponential. An important conjecture of Grigorchuk is that there are no groups whose growth is superpolynomial but bounded above by $\exp(n^{\alpha})$ for some $\alpha<1/2$.  Grigorchuk \cite{Gri89} proved this conjecture in the case of residually-$p$ groups.  

Let $\alpha$ be the positive root of the equation $$x^3-x^2-2x-4=0,\qquad \alpha\approx 2.46,$$ and let 
$\beta=\log_{\alpha}(2)\approx 0.767$.  Bartholdi and Erschler \cite{BE} showed that if a function $f(n)$ with 
$$\exp(n^{\beta})\preceq f(n)\preceq \exp(n)$$ satisfies conditions (i) and (ii) above, along with some additional conditions, then $f(n)$ is asymptotically equivalent to the growth function of a group.  

Let's now shift to algebras over fields. Bergman \cite{Berg} (see also \cite[Theorem 2.5]{KL}) proved that if the growth function of an algebra grows super-linearly then its growth must be at least quadratic. Examples of Borho and Kraft \cite{BK} show that for any $\alpha\in [2,\infty)$ one can find a growth function of an algebra (in fact, even of a hereditary language) that is asymptotically equivalent to $n^{\alpha}$. 

Smoktunowicz and Bartholdi \cite{SB} proved that an arbitrary submultiplicative increasing function $f(n)\succeq n^{\log(n)}$ is asymptotically equivalent to a growth function of an algebra.
Greenfeld \cite{Gre17} showed that if a function lies in the segment 
$$\exp(n^{\alpha})\preceq f(n)\preceq \exp(n)$$ for some $\alpha>0$ and satisfies (i), (ii), along with some additional conditions, then $f(n)$ is asymptotically equivalent to the growth function of a finitely generated simple algebra.

It is clear that the class of growth functions of hereditary languages lies in the class of growth functions of semigroups, which, in turn, lies in the class of growth functions of algebras.  In fact, all these three classes coincide.

Given a map $F:\mathbb{N}\to \mathbb{N}$, let $F'(n)=F(n)-F(n-1)$, $F'(0)=F(0)$, be the (discrete) derivative of $F(n)$.  Our main result completely characterizes the functions that can occur as the growth functions of algebras, semigroups, and hereditary languages.

\begin{theorem}
A growth function of an algebra is asymptotically equivalent to a constant function, a linear function, or a weakly increasing function $F:\mathbb{N}\to \mathbb{N}$ with the following properties:
\begin{enumerate}
\item $F'(n)\ge n+1$ for all $n$;
\item $F'(m)\le F'(n)^2$ for $n\ge 1$ and $m\in\{n,\ldots ,2n\}$.  
\end{enumerate}
\label{thm: main}
Conversely, if $F(n)$ is either a constant function, a linear function, or a weakly increasing function with the above properties then it is asymptotically equivalent to the growth function of a hereditary language.  In particular, this gives a complete characterization of the functions that can occur as the growth function of an algebra, a semigroup, and of a hereditary language.
\end{theorem}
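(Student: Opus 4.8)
The plan is to prove the theorem in two halves. For the forward direction, I would start with the necessary conditions. By Bergman's gap theorem (cited as \cite{Berg}, \cite[Theorem 2.5]{KL}), a growth function is asymptotically equivalent to a constant, a linear function, or something at least quadratic; so assume we are in the last case. For a growth function $d_V(n) = \dim_k V^n$ coming from an algebra $A$ with generating subspace $V \ni 1$, the sequence $V^n$ is an increasing filtration, so $d_V$ is weakly increasing, and submultiplicativity $d_V(m+n)\le d_V(m)d_V(n)$ holds because $V^{m+n} = V^m \cdot V^n$. The subtler point is replacing $d_V$ by an asymptotically equivalent $F$ valued in $\mathbb N$ with the two sharp properties. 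For (i), the point is that if $d_V'(n) = d_V(n) - d_V(n-1)$ were ever too small in a range, one could pass to an associated graded picture and run a Bergman-type argument to show the growth collapses to linear in that range — so after rescaling the argument, one arranges $F'(n)\ge n+1$; this mimics the standard proof that superlinear forces superquadratic, applied "locally." For (ii), submultiplicativity of $F$ itself, applied to the derivative across a doubling interval, gives a bound of the shape $F'(m)\le F'(n)^2$ for $m\in\{n,\dots,2n\}$; here one uses that $F(2n)\le F(n)^2$ together with convexity-type control on the increments, possibly after replacing $F$ by its "smoothed" version $\tilde F(n) = \max_{m\le n}(F(m) + (n-m)F'(m))$ or a similar majorant that is still asymptotically equivalent.

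For the converse — the construction direction — given $F$ satisfying (i) and (ii), I would build a hereditary language $L_W(X)$ whose growth is $\sim F$. The natural strategy, following Borho–Kraft \cite{BK} and the refinements in \cite{SB}, is to choose the alphabet $X$ and the set $W$ of forbidden subwords so that the number of allowed words of length exactly $n$ is governed by $F'(n)$. Concretely, one wants a language in which, at "level $n$", there are roughly $F'(n)$ new words, and the forbidden words are designed so that concatenation of an allowed word of length $m$ with one of length $n$ is allowed only in controlled ways. Condition (i), $F'(n)\ge n+1$, guarantees enough room: with an alphabet of fixed size one can encode $F'(n)$ distinct "states" using blocks of bounded length, since $F'$ grows at least linearly. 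Condition (ii), the quadratic doubling bound $F'(m)\le F'(n)^2$ for $m\in\{n,\dots,2n\}$, is exactly what is needed to ensure the combinatorial gadget is consistent: when one glues two level-$n$ pieces to reach level $2n$, the count of resulting words must not exceed what the individual counts allow, and the square is the worst case from a free concatenation of two blocks.

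The main obstacle, I expect, is the explicit combinatorial construction in the converse direction: designing $W$ so that the growth function of $L_W(X)$ is \emph{exactly} asymptotically equivalent to the prescribed $F$, not merely sandwiched between two powers of it. The difficulty is that forbidding subwords is a rather rigid operation — one has only "$\preceq$-style" and "$\succeq$-style" control — and matching an arbitrary $F$ (which need not be smooth, need not be eventually monotone in its derivative, etc.) requires a flexible encoding scheme. I anticipate one would proceed by partitioning $\mathbb N$ into dyadic-type blocks $[2^i, 2^{i+1})$, on each block designing a sub-gadget realizing the increments $F'$ restricted to that block (where property (ii) makes the increments on a block comparable up to squaring, hence encodable with bounded overhead), and then stitching the blocks together via a "skeleton" word that prevents uncontrolled interaction between blocks. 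Verifying that the stitched language has growth $\sim F$ — in particular that the lower bound $F \preceq d(n)$ survives the stitching — is where the real work lies, and it is likely the longest and most delicate part of the argument. A secondary obstacle is handling the boundary between the "linear" regime and the "superlinear" regime cleanly, and confirming that no growth function lands strictly between linear and the $F'(n)\ge n+1$ family (which is again Bergman's theorem, but needs to be invoked in the sharp increment form).
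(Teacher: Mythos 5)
Your high-level skeleton (Bergman's gap theorem plus an increment inequality for necessity; an explicit hereditary-language construction for sufficiency) matches the paper, but both halves have genuine gaps. In the forward direction, your derivation of condition (ii) does not work as stated: submultiplicativity of the cumulative function $F$ only gives $F(m)\le F(n)F(m-n)$, and this does not bound the \emph{increment} $F'(m)$ by $F'(n)^2$ (increments of a submultiplicative increasing function can spike far above the square of earlier increments), and they need not be monotone, so no ``convexity-type control'' is available for free; moreover, replacing $F$ by a smoothed majorant begs the question, since you would then have to prove that the smoothed function is asymptotically equivalent to $F$ \emph{and} satisfies (ii), which is essentially the statement to be proved. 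The paper's actual mechanism (Proposition \ref{prop: reduction}) is different and is the missing idea: pass to the monomial algebra $k\{x_1,\dots,x_d\}/\mathrm{in}(I)$ via a degree-lexicographic initial ideal (Gr\"obner--Shirshov bases preserve the Hilbert function), so that $F'(n)$ counts the allowed words of length exactly $n$ of a hereditary language, and then observe that a word of length $m\in\{n,\dots,2n\}$ is completely determined by its length-$n$ prefix and length-$n$ suffix, giving $F'(m)\le F'(n)^2$ on the nose. Without this reduction the inequality is not a formal consequence of submultiplicativity.

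In the converse direction you give only a plan (dyadic blocks, per-block gadgets, a ``skeleton'' stitching them together) and you yourself flag the actual construction and the lower bound $F\preceq A$ as the real work; that work is the bulk of the paper and is absent from your proposal. The paper does not proceed by dyadic blocks at all: it introduces the sets $T(d,n)$ of words $x^iyx^{a_1}y\cdots x^{a_s}yx^j$ with all gaps $a_i\ge d$, whose counts interpolate between roughly $n+1$ (when $d=n-1$) and $2^n$ (when $d$ is fixed), proves supermultiplicativity-type estimates such as $n\cdot\#T(d,2n)\ge \#T(d,n)^2$ and the doubling bounds of Lemmas \ref{lem:512} and \ref{growth}, and then recursively chooses a weakly increasing parameter sequence $d_n$ together with correction sets $x^{e_n}T(d_n-1,n-e_n)$ so that the resulting subword-closed language has exact-length counts $a(n)$ with $A(n)\le F(n)$; the delicate part (Lemmas \ref{prop:AF}, \ref{lem:AA} and Theorem \ref{thm:asym}) is showing $F(n)\le A(2^{16}n)$, i.e.\ that the count catches up within a bounded dilation. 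Your proposal contains neither a concrete family of forbidden words nor any argument for this lower bound, and the subword-closure constraint makes the ``stitching without uncontrolled interaction'' step you invoke precisely the difficulty, not a routine verification; so as it stands the sufficiency half is not a proof.
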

One can interpret this theorem as saying that other than the necessary condition $F'(n+i)\le F'(n)^2$ for $n\ge 1$ and $i\in\{n,\ldots ,2n\}$, which is related to submultiplicativity, the only additional constraints required for being realizable as a growth function of an algebra are those coming from the gap theorems of Bergman and the elementary ``gap'' that one cannot have strictly sublinear growth that is not constant.

The outline of this paper is as follows.  In \S2, we show that the conditions given in the statement of Theorem \ref{thm: main} are indeed necessary to be the growth function of an algebra with super-linear growth. In \S3, we introduce a combinatorial sequence, which will play a fundamental role in our construction, and we study its basic asymptotic properties.  In \S4, we use the results of \S3 to show that any function having the above properties in Theorem \ref{thm: main} is indeed asymptotically equivalent to the growth function of a hereditary language.

\section{An additional property of growth functions}
In this section, we look at growth functions of finitely generated associative algebras and show that they must satisfy certain inequalities. Given a function $f:\mathbb{N}\to \mathbb{N}$, we can construct a \emph{global counting function}, $F:\mathbb{N}\to \mathbb{N}$, defined by
$F(n)=f(0)+\cdots +f(n)$.  In general, throughout the paper, when working with maps from $\mathbb{N}$ to $\mathbb{N}$ we will generally use lowercase roman letters for the map and the corresponding uppercase roman letter for the global counting function.  Our main result of this section is the following equivalence.

\begin{proposition}
Let $k$ be a field and let $A$ be a finitely generated $k$-algebra and let $V$ be a finite-dimensional $k$-vector space that contains $1$ and that generates $A$ as a $k$-algebra.  If $F(n)={\rm dim}(V^n)$ and $f(n)=F(n)-F(n-1)$ for $n\ge 1$ then 
$f(m)\le f(n)^2$ for $n\ge 1$, $m\in \{n,\ldots ,2n\}$.
\label{prop: reduction}
\end{proposition}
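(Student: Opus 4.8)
The plan is to reduce the statement to a purely combinatorial fact about factor‑closed languages: once $f$ is interpreted as a word‑counting function, the bound $f(m)\le f(n)^2$ for $m\le 2n$ falls out of the positional overlap between the length‑$n$ prefix and the length‑$n$ suffix of a word of length $m$. This is genuinely stronger than anything one can squeeze out of the filtration directly: the identity $V^m=V^nV^{m-n}$ (valid because $1\in V$) only yields the submultiplicative bound $f(m)\le f(n)\,f(m-n)$, and $f$ need not be weakly increasing (a monomial algebra can have $f=1,2,4,3,3,\dots$), so $f(m-n)$ may exceed $f(n)$.

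\emph{Step 1: pass to standard monomials.} Let $W=\bigoplus_{i\ge 0}W^i$ be the associated graded algebra of $A$ for the filtration $(V^i)$, so $W^i=V^i/V^{i-1}$. Since $1\in V$ we have $V^aV^b=V^{a+b}$ for all $a,b\ge 0$; hence $W$ is connected graded ($W^0=k$), it is generated by $W^1$ (because $V^i=V\,V^{i-1}$), and $\dim W^i=F(i)-F(i-1)=f(i)$ for all $i\ge 0$, with $f(0)=1$. Choosing a basis $x_1,\dots,x_d$ of $W^1$ realizes $W$ as a quotient of the free algebra $k\langle x_1,\dots,x_d\rangle$ by a \emph{homogeneous} ideal $J$. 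Fix a degree‑lexicographic order on the monomials in $X=\{x_1,\dots,x_d\}$ and let $B\subseteq X^{*}$ be the set of standard monomials, i.e. words having no factor equal to the leading monomial of an element of $J$. By the theory of noncommutative Gröbner bases the images of the elements of $B$ form a homogeneous $k$‑basis of $W$; writing $b_\ell$ for the number of words of length $\ell$ in $B$, this gives $b_\ell=\dim W^\ell=f(\ell)$ for every $\ell\ge 0$. Finally $B$ is \emph{factor‑closed}: its complement $X^{*}\setminus B$ is by definition stable under passing from a word to any word containing it as a factor, so every factor of a word of $B$ again lies in $B$.

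\emph{Step 2: the overlap injection.} It remains to show that if $B\subseteq X^{*}$ is factor‑closed with $b_\ell=\#(B\cap X^\ell)$, then $b_m\le b_n^2$ whenever $n\ge 1$ and $n\le m\le 2n$. Given $w\in B\cap X^m$, let $\phi(w)=(p,s)$, where $p$ and $s$ are the prefix and the suffix of $w$ of length $n$; these are defined since $m\ge n$, and they lie in $B\cap X^n$ since they are factors of $w$. The map $\phi\colon B\cap X^m\to (B\cap X^n)\times(B\cap X^n)$ is injective, because $p$ determines the letters of $w$ in positions $1,\dots,n$, $s$ determines the letters in positions $m-n+1,\dots,m$, and since $m\le 2n$ these two ranges cover all of $\{1,\dots,m\}$; hence $w$ is recovered from $(p,s)$. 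Therefore $b_m\le b_n^2$, i.e. $f(m)\le f(n)^2$ for $n\ge 1$ and $m\in\{n,\dots,2n\}$.

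The main obstacle is conceptual rather than computational: one must recognize that the filtration by itself is too coarse and that the right move is to pick a monomial $k$‑basis of $W$ that is simultaneously graded and factor‑closed, turning $f$ into the counting function of a hereditary language; the overlap argument is then immediate. The degenerate case $f(n)=0$ needs no separate treatment, since then $b_n=0$, so $B$ contains no word of length $n$ and hence none of any length $\ge n$, and the inequality holds trivially.
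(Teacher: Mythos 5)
Your proof is correct and takes essentially the same route as the paper: reduce to counting words in a factor-closed (hereditary) set of monomials via a Gr\"obner--Shirshov/standard-monomial basis, and then apply the prefix--suffix overlap injection for $n\le m\le 2n$, which is exactly the paper's argument. The only cosmetic difference is that you first pass to the associated graded algebra and take standard monomials of the homogeneous ideal, whereas the paper works directly with the degree-lexicographic initial ideal of the defining ideal of $A$; since deg-lex is degree-compatible, these amount to the same reduction.
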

\begin{proof}
We may assume that $A=k\{x_1,\ldots ,x_d\}/I$ and that $V$ is the image of the space $$k+kx_1+\cdots +kx_d$$ in $A$. We impose a degree lexicographic order on words over the alphabet $\{x_1,\ldots ,x_d\}$ by declaring that $x_1<\cdots <x_d$. Then every nonzero element $f\in k\{x_1,\ldots ,x_d\}$ has an initial monomial, which we denote ${\rm in}(f)$, which is the maximum of all words that occur in $f$ with nonzero coefficient.  In particular, there is some nonzero $c\in k$ such that $f=c\cdot {\rm in}(f)+f_0$, where $f_0$ is a linear combination of words that are degree lexicographically less than ${\rm in}(f)$. We let ${\rm in}(I)$ denote the ideal generated by elements ${\rm in}(f)$ as $f$ ranges over the nonzero elements of $I$, and we let $B$ denote the monomial algebra $k\{x_1,\ldots ,x_d\}/{\rm in}(I)$.  Then it is straightforward consequence of the fact that we are using a degree lexicographic order and the theory of Gr\"obner-Shirshov bases (see, for example, \cite{BC}) that
if $W$ is the image of the space $k+kx_1+\cdots +kx_d$ in $B$ then $F(n)$, the dimension of $V^n$, is equal to the dimension of $W^n$, and, moreover, this is precisely the number of words over the alphabet $\{x_1,\ldots ,x_d\}$ of length at most $n$ that do not have any word in ${\rm in}(I)$ as a subword.  Hence $f(n)$ is precisely the number of words over the alphabet $\{x_1,\ldots ,x_d\}$ of length exactly $n$ that do not have any word in ${\rm in}(I)$ as a subword. In particular, growth functions can be completely understood in terms of monoid algebras of finitely generated monoids.  

Notice that if we have a monoid on generators $y_1,\ldots ,y_d$ and we let $g(n)$ denote the number of distinct nonzero words of length $n$ in $y_1,\ldots ,y_d$ then we must have
$g(n+i)\le g(n)^2$ for all $i=0,\ldots ,n$. To see this, observe that to a nonzero word $w$ of length $n+i$ over the alphabet $\{ y_1,\ldots ,y_d\}$, it has a prefix $w'$ of length $n$ and a suffix $w''$ of length $n$, and notice that $w$ is completely determined by $n+i$, $w'$, and $w''$, since $n+i\le 2n$.  Thus we obtain the result $g(n+i)\le g(n)^2$.  The result follows.
\end{proof}

\section{The function $T(d,n)$ and preliminary estimates}
In this section, we introduce a combinatorial function $T(d,n)$, which enumerates certain collections of monomials, and we prove basic asymptotic results concerning this function, which we collect in a series of lemmas.  

Given natural numbers $d$ and $n$ with $1\le d\le n$, we let $T(d,n)$ denote the collection of monomials of length $n$ of the form
$x^i y x^{a_1} y x^{a_2} y \cdots x^{a_s} y x^j$ in which $a_1,\ldots ,a_s\ge d$ with $s\ge 0$ along with the monomial $x^n$.  We find it convenient to introduce the following notation: given a power series ${\sf A}(t)\in R[[t]]$, with $R$ a ring, we let $[t^m]{\sf A}(t)$ denote the coefficient of $t^m$ in ${\sf A}(t)$. 
Then the cardinality of the set of elements in $T(d,n)$ with $s$ occurrences of $y$, $s\ge 1$,
is 
$$[t^{n-s}](1+t+t^2+\cdots )(t^d+t^{d+1}+\cdots )^{s-1} (1+t + t^2+\cdots)$$ and so
$$\#T(d,n)=1+\sum_{s=1}^{\infty} [t^{n-s}] (1-t)^{-2} (t^d/(1-t))^{s-1},$$
which is 
$$1+[t^n] \sum_{s=1}^{\infty} t^s t^{d(s-1)}/(1-t)^{s+1}.$$  Observe that
$$\sum_{s=1}^{\infty} t^s t^{d(s-1)}/(1-t)^{s+1} = \frac{t}{(1-t)^2} \cdot \frac{1}{(1-t^{d+1}/(1-t))} = t(1-t)^{-1}(1-t-t^{d+1})^{-1}.$$
Let
\begin{equation}
h_{d+1}(m):=[t^m](1-t-t^{d+1})^{-1}.
\end{equation}
Then we see that \begin{equation}\label{closed form}
\#T(d,n) = 1+h_{d+1}(0)+\cdots + h_{d+1}(n-1).\end{equation}
\vskip 2mm
\begin{convention} We take $T(i,n)$ to be the set $T(0,n)$ when $i<0$.
\end{convention}
Now we have the following straightforward estimates.  
\begin{lemma} For nonnegative integers $d$ and $n$ with $d<n$ and $n\ge 2$, we have
$$n\cdot \#T(d,2n) \ge \#T(d,n)^2.$$
\label{unequal}
\end{lemma}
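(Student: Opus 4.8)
The plan is to prove the inequality by a single combinatorial injection. First I would reinterpret $\#T(d,n)$ set-theoretically. A monomial $x^{a_0}yx^{a_1}y\cdots yx^{a_{s-1}}yx^{a_s}$ in $T(d,n)$ with $s\ge 1$ occurrences of $y$ is determined by, and determines, the $s$-element subset of $\{1,\dots,n\}$ recording the positions of the letters $y$; the condition $a_1,\dots,a_{s-1}\ge d$ is precisely the condition that consecutive such positions differ by at least $d+1$, and the remaining monomial $x^n$ corresponds to the empty set. Call a subset of $\{1,\dots,N\}$ \emph{admissible} if any two of its distinct elements differ by at least $d+1$, and let $P(N)$ be the number of admissible subsets of $\{1,\dots,N\}$. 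Then $\#T(d,n)=P(n)$, so it suffices to construct an injection
$$\{(Y,Y'):Y,Y'\subseteq\{1,\dots,n\}\text{ admissible}\}\ \hookrightarrow\ \{Z\subseteq\{1,\dots,2n\}\text{ admissible}\}\times\{1,\dots,n\}.$$

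Given admissible $Y,Y'\subseteq\{1,\dots,n\}$, the obvious candidate is $Z:=Y\cup(Y'+n)$, where $Y'+n:=\{j+n:j\in Y'\}\subseteq\{n+1,\dots,2n\}$. All pairwise differences within $Y$ and within $Y'+n$ are inherited, so the only obstruction to $Z$ being admissible is a short gap across the junction. When $Z$ is admissible I set $(Y,Y')\mapsto(Z,n)$; then $Y=Z\cap\{1,\dots,n\}$ and $Y'=(Z\setminus\{1,\dots,n\})-n$ recover $(Y,Y')$, so this part of the map is injective.

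The heart of the matter is that a junction failure is extremely rigid. Suppose $i\in Y$, $j\in Y'$ and $(j+n)-i\le d$. Since $j\ge 1$ this forces $i\ge n-d+1$, and since $i\le n$ it forces $j\le d$. But any $d$ consecutive integers contain at most one element of an admissible set (two of them differ by at most $d-1<d+1$), so $Y$ has a single element in $\{n-d+1,\dots,n\}$, which must be both $i$ and $\max Y$, and likewise $j=\min Y'$. Hence $Z=Y\cup(Y'+n)$ fails to be admissible \emph{only} because of the one pair $\{\max Y,\ \min Y'+n\}$, and in particular $Y,Y'\ne\emptyset$ in this case. I then set $q:=\min Y'$, $Z:=Y\cup((Y'\setminus\{q\})+n)$, and send $(Y,Y')\mapsto(Z,q)$. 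Deleting the smallest element of $Y'$ restores admissibility: if $Y'\setminus\{q\}\ne\emptyset$, its least element $q'$ satisfies $q'\ge q+d+1\ge d+2$, whence $(q'+n)-\max Y\ge q'\ge d+1$; and if $Y'\setminus\{q\}=\emptyset$ then $Z=Y$. Moreover $(Y,Y')$ is recovered from $(Z,q)$ by $Y=Z\cap\{1,\dots,n\}$ and $Y'=((Z\setminus\{1,\dots,n\})-n)\cup\{q\}$. This is the one place the hypothesis $d<n$ enters: we showed $q=j\le d$, and $d<n$ gives $q\le n-1$, so in this second case the recorded integer never equals $n$. Consequently the images of the two cases are disjoint --- tell them apart by whether the second coordinate equals $n$ --- each case is reversible, and the combined map is injective. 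Therefore $P(n)^2\le n\,P(2n)$, i.e.\ $\#T(d,n)^2\le n\cdot\#T(d,2n)$.

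I expect the rigidity observation of the third paragraph to be the main obstacle: one has to notice that the naive union $Y\cup(Y'+n)$ can fail admissibility in only one, fully pinned-down way --- through $\max Y$ and $\min Y'$ --- and then check that excising a single element simultaneously repairs the configuration and is undone by one recorded integer, with $d<n$ being exactly what keeps the recorded integer out of the range used by the other case.
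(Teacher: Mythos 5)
Your proof is correct, and it takes a genuinely different route from the paper's. The paper argues analytically from the closed form $\#T(d,n)=1+h_{d+1}(0)+\cdots+h_{d+1}(n-1)$, where $h_{d+1}(m)$ counts words in the free monoid on two generators of degrees $1$ and $d+1$: it combines the supermultiplicativity $h_{d+1}(2m)\ge h_{d+1}(m)^2$ and the monotonicity of $h_{d+1}$ with the Cauchy--Schwarz inequality applied to the partial sums, and the factor $n$ (together with the hypotheses $n\ge 2$ and $\#T(d,n)\ge 3$) appears only in the final clean-up step $(\#T(d,2n)+1)(n+1)/2\le n\cdot\#T(d,2n)$. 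You instead encode elements of $T(d,n)$ by their sets of $y$-positions (subsets of $\{1,\dots,n\}$ with pairwise gaps at least $d+1$, which is the correct translation of the paper's definition, the monomial $x^n$ giving the empty set) and build an explicit injection from pairs of such sets into admissible subsets of $\{1,\dots,2n\}$ times a label in $\{1,\dots,n\}$; the key point, which you verify correctly, is the rigidity statement that the concatenation $Y\cup(Y'+n)$ can violate admissibility only at the single pair $(\max Y,\ \min Y'+n)$, so deleting $q=\min Y'\le d$ repairs it (since $\min(Y'\setminus\{q\})\ge q+d+1$) and the pair is recoverable from $(Z,q)$, with $d<n$ ensuring the label $q$ never collides with the label $n$ used in the unobstructed case. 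Your argument even gives a little more: it needs only $n\ge 1$ rather than $n\ge 2$, and since the label really ranges over $\{1,\dots,d\}\cup\{n\}$ it yields the sharper bound $\#T(d,n)^2\le(d+1)\,\#T(d,2n)$. What the paper's route buys in exchange is economy within the larger argument: the closed form and the combinatorial reading of $h_{d+1}$ set up there are reused later (e.g.\ in Lemma~\ref{lem3}), whereas your injection is self-contained and specific to this inequality.
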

\begin{proof}
We have shown that
$\#T(d,n) = 1+h_{d+1}(0)+\cdots + h_{d+1}(n-1)$, where $h_s(m)$ is the coefficient of $t^m$ in $(1-t-t^s)^{-1}$.  Combinatorially, $h_{d+1}(m)$ is just enumerating the number of words of degree $m$ in the free monoid generated by $\{u,v\}$, where $u$ has degree $1$ and $v$ has degree $d+1$.  In particular, considering concatenation of two words of degree $m$ shows that 
\begin{equation} h_{d+1}(2m)\ge h_{d+1}(m)^2,\end{equation} and by considering appending $u$ to the set of words of degree $n$ gives 
\begin{equation} h_{d+1}(n)\le h_{d+1}(n+1).\end{equation}
Thus 
\begin{align*} \#T(d,2n) & = 1+h_{d+1}(0)+\cdots + h_{d+1}(2n-1) \\
&\ge  1+2(h_{d+1}(0)+h_{d+1}(2)+\cdots + h_{d+1}(2n-2)) \\
&\ge  2(1^2 + h_{d+1}(0)^2+h_{d+1}(1)^2+\cdots + h_{d+1}(n-1)^2)-1.\end{align*}
By the Cauchy-Schwarz inequality we have
\begin{align*}
\#T(d,n)^2 =& (1+h_{d+1}(0)+\cdots + h_{d+1}(n-1))^2 \\
&\le  (1^2 + h_{d+1}(0)^2+h_{d+1}(1)^2+\cdots + h_{d+1}(n-1)^2)(n+1)\\
& \le  (\#T(d,2n)+1)(n+1)/2 \\
&\le  \#T(d,2n)\cdot n,\end{align*}
where the last step follows from the fact that $n\ge 2$ and $\#T(d,n)\ge 3$.  The result follows.
\end{proof}

\begin{lemma}
\label{lem2}
Let $d$ and $n$ be natural numbers with $d<n$ and $n\ge 64$.  If $T(d,n)\le n^{4/3}$, then
$$\#T(d,n)^2 \le 2n^{2/3}\cdot \#T(d,2n).$$
\end{lemma}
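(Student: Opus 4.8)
The plan is to bypass Lemma~\ref{unequal}: the bound $\#T(d,2n)\ge \#T(d,n)^2/n$ that it supplies only yields $2n^{2/3}\#T(d,2n)\ge 2\#T(d,n)^2/n^{1/3}$, which is weaker than $\#T(d,n)^2$ as soon as $n>8$. Instead I would obtain a lower bound for $\#T(d,2n)$ that makes no reference to $\#T(d,n)$ at all, and then feed the hypothesis $\#T(d,n)\le n^{4/3}$ in directly.

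First I would bound $\#T(d,2n)$ from below by keeping only the monomials in $T(d,2n)$ with exactly two occurrences of $y$, namely those of the form $x^i y x^a y x^j$ with $i,j\ge 0$ and $a\ge d$. Such a monomial has length $2n$, so $i+a+j=2n-2$; writing $a=d+a'$ with $a'\ge 0$ and counting nonnegative solutions of $i+a'+j=2n-2-d$ by stars and bars shows there are exactly $\binom{2n-d}{2}$ of them (this is just the $s=2$ term in the generating-function count used to derive \eqref{closed form}). Since $d<n$ we have $2n-d\ge n+1$, and therefore
$$\#T(d,2n)\ge \binom{2n-d}{2}\ge \binom{n+1}{2}=\frac{n(n+1)}{2}\ge \frac{n^2}{2}.$$

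Then the hypothesis gives $\#T(d,n)^2\le n^{8/3}$, and because $n^{8/3}=2n^{2/3}\cdot\tfrac{n^2}{2}$, the two displays combine to
$$\#T(d,n)^2\le n^{8/3}=2n^{2/3}\cdot\frac{n^2}{2}\le 2n^{2/3}\cdot\#T(d,2n),$$
which is the assertion. The only thing to be careful about is the bookkeeping of constants and exponents: the factor $2$ and the exponent $2/3$ in the statement are calibrated exactly so that $2n^{2/3}\cdot(n^2/2)=n^{8/3}=(n^{4/3})^2$, and the lower bound $\binom{n+1}{2}\ge n^2/2$ — slightly stronger than $\binom{n}{2}$, and the place where $d<n$ rather than merely $d\le n$ is used — is precisely what makes the chain close. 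So there is no genuine obstacle here; I note in passing that one could alternatively deduce $n-d<2n^{2/3}$ from $\binom{n-d}{2}\le\#T(d,n)\le n^{4/3}$, and even obtain the exact value $\#T(d,n)=n+1+\binom{n-d}{2}$ for $n\ge 64$, but none of that is needed for the stated bound.
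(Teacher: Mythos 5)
Correct, and essentially the paper's own argument: in both, the heart of the proof is the lower bound $\#T(d,2n)\ge\binom{2n-d}{2}\ge\binom{n+1}{2}\ge n^2/2$ (using only $d<n$), combined with $\#T(d,n)^2\le n^{8/3}=2n^{2/3}\cdot\tfrac{n^2}{2}$. You streamline it by omitting the paper's preliminary case split ($d\le n/2$, where the hypothesis is shown to fail, versus $n/2<d<n$ with the exact count $\#T(d,n)=\binom{n-d}{2}+n+1$ and the bound $n-d\le\sqrt{2}\,n^{2/3}$), which, as your version makes clear, is not needed for the stated inequality.
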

\begin{proof}
Notice that if $d\le n/2$, then $T(d,n)$ is at least the number of monomials of the form $x^i y x^j y x^k$ with $i+j+k=n-2$ and with $j\ge d$, which is ${n-d\choose 2} > 
n^2/16\ge n^{4/3}$ since $n\ge 64$ and $d\le n/2$.   Thus we may assume without loss of generality that $n/2 < d < n$.  In this case, it is straightforward to compute that $
\#T(d,n)={n-d\choose 2}+n+1\ge (n-d)^2/2$. Thus if 
$\# T(d,n)\le n^{4/3}$ then we must have $(n-d)\le \sqrt{2}\cdot n^{2/3}$ and so $n-\sqrt{2} \cdot n^{2/3}\le d < n$.   Now $T(d,2n)$ contains the set of monomials of the form $x^i y x^j y x^k$ with $i+j+k=2n-2$ and $j\ge d$, which has size ${2n-d\choose 2}$, which is greater than or equal to $n^2/2$, since $d<n$.  Thus 
$$\#T(d,2n) 2n^{2/3} \ge n^{8/3}\ge \# T(d,n)^2$$ in this case, and the result follows.   
\end{proof}

\begin{lemma}
\label{lem3}
Let $n$ be a nonnegative integer. If $d\ge 1$ then $\#T(d,N)\ge \#T(d-1,n)$ whenever $N\ge 2+(n-1)(d+1)/d$.  In addition, $\#T(0,N)\ge \#T(0,n)$ whenever $N\ge n$.\end{lemma}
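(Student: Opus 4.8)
The plan is to reduce everything to the closed form \eqref{closed form}, namely $\#T(d,n) = 1 + h_{d+1}(0) + \cdots + h_{d+1}(n-1)$, where $h_{d+1}(m) = [t^m](1-t-t^{d+1})^{-1}$ counts words of degree $m$ in the free monoid on a letter $u$ of degree $1$ and a letter $v$ of degree $d+1$. Since $\#T(d,N)$ is the partial sum of the $h_{d+1}(m)$, and each $h_{d+1}(m)$ is a nonnegative integer, the statement $\#T(d,N) \ge \#T(d-1,n)$ will follow once I produce an injection from the words counted by $h_d(0) + \cdots + h_d(n-1)$ (i.e. those of degree $< n$ in the monoid with $\deg v = d$) into the words of degree $< N$ in the monoid with $\deg v = d+1$. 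The natural map is: take a word $w$ in $\{u,v\}$, leave the $u$'s alone, and replace each occurrence of $v$ by $v$ but now reading it in the new monoid, where it has degree $d+1$ instead of $d$. This is clearly injective on the level of abstract words; I just need to control how much the degree can grow.

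For the first assertion, suppose $w$ has degree $m \le n-1$ in the old monoid, with $a$ copies of $u$ and $b$ copies of $v$, so $a + bd = m$. In the new monoid its degree is $a + b(d+1) = m + b$. Since each $v$ contributes at least $d$ to the old degree, $b \le m/d \le (n-1)/d$, so the new degree is at most $m + (n-1)/d \le (n-1) + (n-1)/d = (n-1)(d+1)/d$. Hence every word in the image has degree at most $(n-1)(d+1)/d$, and in particular degree at most $N-2 < N$ as soon as $N \ge 2 + (n-1)(d+1)/d$ (the extra slack of $2$ is harmless and is what the statement allows). Therefore the $h_{d+1}$-words of degree $\le N-1$ include an injective copy of the $h_d$-words of degree $\le n-1$, giving
$$\#T(d,N) - 1 = \sum_{m=0}^{N-1} h_{d+1}(m) \ \ge\ \sum_{m=0}^{n-1} h_d(m) = \#T(d-1,n) - 1,$$
and adding back the $1$ (coming from the monomial $x^n$, which is accounted for uniformly) gives $\#T(d,N) \ge \#T(d-1,n)$. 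One should double-check the boundary cases where $d-1 < 0$, but the Convention identifies $T(i,n)$ with $T(0,n)$ for $i<0$, and $T(0,N) \supseteq$-type monotonicity is exactly the second assertion, so no separate argument is needed there beyond noting consistency.

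The second assertion, $\#T(0,N) \ge \#T(0,n)$ for $N \ge n$, is immediate from \eqref{closed form}: $\#T(0,N) = 1 + \sum_{m=0}^{N-1} h_1(m)$ is a partial sum of nonnegative terms, hence weakly increasing in $N$. (Equivalently, $T(0,n) \subseteq T(0,N)$ after padding with $x$'s, since $T(0,n)$ allows the exponents $a_i$ to be any nonnegative integers.) The only place any care is required is the degree bookkeeping in the first part — making sure the constant $2$ in $N \ge 2 + (n-1)(d+1)/d$ genuinely absorbs the rounding in $b \le \lfloor m/d \rfloor$ and the passage from "degree $\le N-1$" to the clean bound — but this is routine once the injection above is in hand. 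I expect the substitution-of-generators injection to be the conceptual crux; everything else is an elementary estimate on partial sums.
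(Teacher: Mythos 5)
Your proof is correct and follows essentially the same route as the paper: both use the closed form \eqref{closed form}, interpret $h_d(m)$ and $h_{d+1}(m)$ as counting words in a free monoid on generators of degrees $1$ and $d$ (resp.\ $d+1$), and inject by replacing the heavy generator while bounding the degree growth by $m(d+1)/d$. The only (harmless) difference is that the paper pads with $u$'s to get the degree-by-degree bound $h_d(m)\le h_{d+1}(\lceil m(d+1)/d\rceil)$ before summing, whereas you inject directly into the set of all words of degree at most $N-1$, which slightly streamlines the bookkeeping.
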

\begin{proof}
The fact that $\#T(0,N)\ge \#T(0,n)$ whenever $N\ge n$ is immediate, so we may consider the first statement and let $d\ge 1$.
We have shown that
$\#T(d,n) = 1+h_{d+1}(0)+\cdots + h_{d+1}(n-1)$, where $h_s(m)$ is the coefficient of $t^m$ in $(1-t-t^s)^{-1}$.  Combinatorially, $h_{d+1}(m)$ is just enumerating the number of words of degree $m$ in the free monoid generated by $\{u,v\}$, where $u$ has degree $1$ and $v$ has degree $d+1$.  Similarly, $h_{d}(m)$ is the number of words of degree $m$ in the monoid generated by $\{u,v'\}$ where $v'$ has degree $d$.  Notice that there is an injection from the set of words in the monoid generated by $u,v'$ of degree $n$ into the set of words in the monoid generated by $u,v$ of degree $\lceil n(d+1)/d\rceil$ given by replacing all copies of $v'$ by $v$ and then appending the necessary number of $u$'s at the end to make the degree equal to $\lceil n(d+1)/d\rceil$.  Thus
$h_d(m)\le h_{d+1}(\lceil m(d+1)/d\rceil)$.  Hence if $N-1\ge \lceil (n-1)(d+1)/d\rceil$ then we have
\begin{align*}
\#T(d-1,n) &= 1 + h_d(0)+\cdots +h_d(n-1) \\
&\le  1 + \sum_{j=0}^{n-1} h_{d+1}(\lceil j(d+1)/d\rceil) \\
&\le  1 +  \sum_{j=0}^{N-1} h_{d+1}(j)\\
&= \#T(d,N).\end{align*}  In particular, this holds whenever $N\ge 2+(n-1)(d+1)/d$.

\end{proof}

\begin{lemma} Let $d$ and $n$ be natural numbers with $d\le n-1$.  Then $\#T(d,4n)\ge (4n)^{4/3}$ for all $n\ge 512$.
\label{base}
\end{lemma}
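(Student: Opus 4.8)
The plan is to lower-bound $\#T(d,4n)$ by restricting attention to the sub-family of monomials in $T(d,4n)$ having exactly two occurrences of $y$, namely those of the form $x^i y x^j y x^k$ with $i+j+k = 4n-2$ and $j \ge d$ (the $s=1$ elements of $T(d,4n)$, in the notation of Section~3). The point that makes this family large is that $d \le n-1$ forces $d < 2n = (4n)/2$, so the gap condition $j \ge d$ is easy to meet inside a word of length $4n$; this is exactly the observation exploited in the proof of Lemma~\ref{lem2} in the case $d \le N/2$, here with $N = 4n$. Distinct triples $(i,j,k)$ yield distinct monomials, so simply counting them produces a genuine lower bound for $\#T(d,4n)$.

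Next I would carry out the elementary count: substituting $j = d + j'$ with $j' \ge 0$ turns the constraint into $i + j' + k = 4n - 2 - d$, and since $4n - d \ge 4n - (n-1) = 3n + 1 \ge 2$, the number of nonnegative integer solutions is exactly $\binom{4n-d}{2}$. Using that $m \mapsto \binom{m}{2}$ is increasing for $m \ge 1$, this gives
\[
\#T(d,4n) \;\ge\; \binom{4n-d}{2} \;\ge\; \binom{3n+1}{2} \;=\; \frac{3n(3n+1)}{2} \;>\; \frac{9n^2}{2}.
\]

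It then remains to compare $9n^2/2$ with $(4n)^{4/3} = 4^{4/3} n^{4/3}$, which amounts to checking $\frac{9}{2} n^{2/3} \ge 4^{4/3}$, i.e.\ $n^{2/3} \ge \frac{2}{9} \cdot 4^{4/3}$. The right-hand side is less than $2$, whereas $n \ge 512$ gives $n^{2/3} \ge 512^{2/3} = 64$, so the inequality holds with enormous room to spare (in fact any $n \ge 2$ suffices; the threshold $n \ge 512$ is kept only for uniformity with the later lemmas and the construction in Section~4). I do not expect any genuine obstacle in this argument — the only step needing a little care is verifying that the displayed family of monomials really is contained in $T(d,4n)$ and counting it correctly; once that is done, the numerical estimate is immediate.
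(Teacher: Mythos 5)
Your proof is correct, and it takes a genuinely more elementary route than the paper's. The paper proves Lemma \ref{base} by bootstrapping: it splits into cases according to whether $\#T(d,n)\ge n^{4/3}$ and whether $\#T(d,2n)>(2n)^{4/3}$, and in each case combines the doubling inequality of Lemma \ref{unequal} with Lemma \ref{lem2} to push the bound from length $n$ up to length $4n$. You instead observe that the hypothesis $d\le n-1$ makes the gap constraint very weak relative to the length $4n$, and you count directly the monomials $x^i y x^j y x^k$ with $i+j+k=4n-2$, $j\ge d$ (two occurrences of $y$; this is the same device used in the first case of the proof of Lemma \ref{lem2}, there with $d\le N/2$). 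Your count $\binom{4n-d}{2}\ge\binom{3n+1}{2}>\tfrac{9}{2}n^2$ is right, these monomials do lie in $T(d,4n)$ and are pairwise distinct, and $\tfrac{9}{2}n^2\ge 4^{4/3}n^{4/3}$ indeed holds for all $n\ge 2$, so the stated range $n\ge 512$ is covered with huge margin. What each approach buys: yours is shorter, self-contained, and gives the statement for essentially all $n$, making clear that the threshold $512$ is only needed for the later lemmas; the paper's version recycles Lemmas \ref{unequal} and \ref{lem2}, which are needed anyway for Lemmas \ref{lem:512} and \ref{growth}, but is logically more roundabout for this particular statement. One cosmetic point: the paper's displayed form of $T(d,n)$ uses $s$ for the number of internal gaps (so two $y$'s corresponds to $s=1$, as you say), while its generating-function computation uses $s$ for the number of $y$'s; your usage is consistent with the former, and in any case the containment you need is immediate from the definition.
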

\begin{proof}
If $\#T(d,n)\ge n^{4/3}$, then if we use Lemma \ref{unequal}, we see that
\begin{align*}
\#T(d,4n) &\ge  \#T(d,2n)
 \ge  \#T(d,n)^2/n \ge  n^{5/3}
 \ge 
(4n)^{4/3}\end{align*}
 for $n\ge 512$.  
If $\#T(d,n)< n^{4/3}$ then by Lemma \ref{lem2}, using the fact that $\#T(d,n)\ge n+1$, we have
$$\#T(d,2n)\ge \#T(d,n)^2 n^{-2/3}/2 \ge (n+1)^2 n^{-2/3}/2 \ge n^{4/3}/2.$$
Now there are two cases.
First, if  $\#T(2n,n)\le (2n)^{4/3}$, then 
$n^{4/3}/2\le \#T(2n,n)\le (2n)^{4/3}$, and so using Lemma \ref{lem2}, we see
$$\#T(d,4n)\ge   \#T(d,2n)^2 (2n)^{-2/3}/2 \ge n^{8/3} (2n)^{-2/3}/8 \ge (4n)^{4/3}$$ for $n\ge 512$.
If $\#T(d,2n)>(2n)^{4/3}$ then by Lemma \ref{unequal} we have
$$\#T(d,4n)\ge \#T(d,2n)^2/(2n) \ge (2n)^{8/3}\cdot (2n)^{-1} \ge (4n)^{4/3}$$ for $n\ge 256$. The result follows.
\end{proof} 
\begin{lemma} Let $n$ and $d$ be natural numbers with $d\le n$.  For $n\ge 2^{19}$ we have 
$\#T(d,64 n)\ge 512 n \#T(d-1,n)^2$.
\label{lem:512}
\end{lemma}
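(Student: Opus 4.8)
The plan is to reduce the claim to the already-established lemmas by chaining together the scaling estimate of Lemma~\ref{lem3} (which converts a $\#T(d-1,-)$ bound into a $\#T(d,-)$ bound after inflating the length by a factor essentially $(d+1)/d$), the near-submultiplicativity of Lemma~\ref{unequal} (which lets us square $\#T(d,n)$ at the cost of a factor of $n$ and a doubling of the length), and the lower bound of Lemma~\ref{base} (which says $\#T(d,4n)\ge (4n)^{4/3}$ once $n$ is large). First I would split into two regimes according to the size of $d$ relative to $n$, since the factor $(d+1)/d$ in Lemma~\ref{lem3} is only close to $1$ when $d$ is not too small. When $d$ is comparatively large, say $d\ge \sqrt n$ or so, Lemma~\ref{lem3} gives $\#T(d,N)\ge \#T(d-1,n)$ for $N$ as small as roughly $n + n/d + 2 \le n + \sqrt n + 2$, which is well below $2n$; then I would apply Lemma~\ref{unequal} twice (or rather once, to $\#T(d,2n)\ge \#T(d,n)^2/n$, after first boosting to length $2n$ using monotonicity of $h_{d+1}$, i.e. the inequality $h_{d+1}(m)\le h_{d+1}(m+1)$ which is implicit in the proof of Lemma~\ref{unequal}) to pass from length $\sim n$ to length $64n$ while squaring, and keep careful track of the polynomial-in-$n$ losses so that the surviving constant beats $512n$.

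For the small-$d$ regime the factor $(n-1)(d+1)/d$ in Lemma~\ref{lem3} can be as large as about $2n$ (when $d=1$), so a single application of Lemma~\ref{lem3} already consumes a length blow-up of $2$; but then $\#T(d,4n)$ is controlled by Lemma~\ref{base}, and we have lots of room because $64n / 4n = 16$ and Lemma~\ref{unequal} applied twice turns $\#T(d,4n)\ge (4n)^{4/3}$ into $\#T(d,16n)\ge \#T(d,8n)^2/(4n) \ge \#T(d,4n)^4/(4n)^3 \ge (4n)^{16/3}/(4n)^3 = (4n)^{7/3}$ and then $\#T(d,64n)$ even larger, which dwarfs $512n\,\#T(d-1,n)^2$ once we also bound $\#T(d-1,n)$ from above — and here I would use the crude bound $\#T(d-1,n)\le \#T(0,n)\le $ (number of all monomials of length $\le n$ in $x,y$) $\le 2^{n+1}$ is too weak, so instead I would bound $\#T(d-1,n)$ using the same machinery: either $\#T(d-1,n)< n^{4/3}$, in which case $\#T(d-1,n)^2 < n^{8/3}$ and we are easily done, or $\#T(d-1,n)\ge n^{4/3}$, in which case Lemma~\ref{unequal} gives $\#T(d-1,2n)\ge \#T(d-1,n)^2/n$ and hence $\#T(d-1,n)^2 \le n\,\#T(d-1,2n)$, after which Lemma~\ref{lem3} converts $\#T(d-1,2n)$ into a lower bound for $\#T(d, N)$ for $N$ a bounded multiple of $n$, and monotonicity lifts this to $64n$.

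Concretely, the skeleton of the argument is: (a) the trivial upper bound $\#T(d-1,n)^2 \le \max(n^{8/3},\ n\cdot \#T(d-1,2n))$ coming from the dichotomy above; (b) Lemma~\ref{lem3} applied with $d-1$ in place of $d-1$'s successor, i.e. $\#T(d-1,2n)\le \#T(d, N_0)$ where $N_0 = 2 + (2n-1)(d+1)/d \le 2 + 2(2n-1) \le 4n$ for all $d\ge 1$; (c) monotonicity in the length to replace $N_0$ by $4n$: $\#T(d,N_0)\le \#T(d,4n)$; (d) two applications of Lemma~\ref{unequal} (with the monotonicity step $h_{d+1}(m)\le h_{d+1}(m+1)$ used to fill out lengths to powers of $2$ times $4n$): $\#T(d,8n)\ge \#T(d,4n)^2/(4n)$ and $\#T(d,16n)\ge\#T(d,8n)^2/(8n)$, hence $\#T(d,16n)\ge \#T(d,4n)^4/(4n)^3$, and one more step $\#T(d,64n)\ge \#T(d,32n)^2/(16n)\ge \#T(d,16n)^4/((16n)^3(8n))\ge \cdots$; (e) combine with Lemma~\ref{base}, $\#T(d,4n)\ge (4n)^{4/3}$ for $n\ge 512$, to get a lower bound on $\#T(d,64n)$ that is a large power of $n$ times a small constant; (f) check that this power of $n$ exceeds $512n\cdot n^{8/3}$ and $512n\cdot n\cdot\#T(d-1,2n)$ once $n\ge 2^{19}$ — in the latter case using again $\#T(d-1,2n)\le \#T(d,4n)\le \#T(d,64n)^{1/(\text{large})}\cdot(\text{poly})$, so that $\#T(d-1,2n)$ is itself dominated by a sub-linear power of $\#T(d,64n)$.

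The main obstacle I anticipate is bookkeeping rather than conceptual: Lemma~\ref{lem3}'s length inflation factor degrades to $2$ precisely when $d$ is small, and Lemma~\ref{unequal} costs a factor of $n$ per squaring, so one must verify that starting from length $\le 4n$ there are enough doublings available inside $64n$ (namely four: $4n\to 8n\to 16n\to 32n\to 64n$) to accumulate a power of $n$ large enough — growing like $(4n)^{4/3}$ raised to $2^4=16$, i.e. $\sim n^{64/3}$, against losses $\prod (\text{length})\sim n^{15}$ — so the net is $\sim n^{64/3-15}=n^{19/3}$, comfortably above $512n\cdot\#T(d-1,n)^2$. Care is needed at the threshold $n\ge 2^{19}$ to absorb the explicit constant $512$ and the powers of $4,8,16,32$; I would handle this by noting $2^{19}$ is far larger than $512=2^9$, so every "for $n\ge 512$" hypothesis in the invoked lemmas is met with enormous slack, and the constant $512$ in the conclusion is tiny compared to the $n^{19/3}$ gap. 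I would also double-check the edge case $d=n$ versus the hypothesis $d\le n$ (Lemma~\ref{base} and Lemma~\ref{lem3} as stated want $d\le n-1$ and $d\ge 1$ respectively); when $d=n$ one has $\#T(d,n)=n+1$ exactly, so $\#T(d-1,n)$ with $d-1=n-1$ is the relevant input and all invoked lemmas apply with their stated ranges.
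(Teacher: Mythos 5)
Your proposal is correct and follows essentially the same route as the paper's own proof: seed with Lemma~\ref{base} at length $4n$, iterate Lemma~\ref{unequal} four times to climb $4n\to 8n\to 16n\to 32n\to 64n$ (accumulating a loss of $2^{41}n^{15}$), reserve two factors of $\#T(\cdot,4n)$ to dominate $\#T(d-1,n)^2$, and spend the remaining fourteen $(4n)^{4/3}$-factors to absorb the losses and the prefactor $512n$, for which $n\ge 2^{19}$ leaves enormous slack. The only differences are cosmetic: the paper stays at level $d-1$ and uses monotonicity in the length where you insert the explicit Lemma~\ref{lem3} conversion $\#T(d-1,2n)\le \#T(d,4n)$ (arguably a cleaner handling of the $d$ versus $d-1$ bookkeeping), your dichotomy on whether $\#T(d-1,n)<n^{4/3}$ is unnecessary since Lemma~\ref{unequal} holds unconditionally, and a few intermediate constants (e.g.\ $\#T(d,16n)\ge \#T(d,8n)^2/(4n)$, which should have denominator $8n$) are off by harmless factors of $2$ that the $n^{19/3}$ surplus easily swallows.
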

\begin{proof} We prove this by induction on $n$.
By Lemma \ref{base},
we have $\#T(d-1,4n)\ge (4n)^{4/3}$ and so repeatedly applying Lemma \ref{unequal} gives
\begin{align*}
\#T(d-1,64n) &\ge   \#T(d-1,32n)^2/(32n) \\
&\ge  \#T(d-1,16n)^4/(2^{13} n^3) \\
& \ge  \#T(d-1,8n)^8/(2^{25}n^7) \\
&\ge  \#T(d-1,4n)^{16}/(2^{41}n^{15}).
\end{align*}
But now using the fact that $\#T(d-1,4n)\ge (4n)^{4/3}$ gives \begin{align*}
\#T(d-1,4n)^{16} & \ge  \#T(d-1,4n)^2\cdot \#T(d-1,4n)^{14}/(2^{41} n^{15})\\
&\ge  \#T(d-1,4n)^2\cdot \frac{ ((4n)^{4/3} )^{14}}{2^{41}n^{15}} \\
&\ge  \#T(d-1,4n)^2 \cdot  \frac{2^{112/3} n^{56/3}}{2^{41} n^{15}} \\
&\ge \#T(d-1,4n)^2 \cdot n^{11/3} 2^{-11/3}\\ 
& \ge  512 n \#T(d-1,4n)^2  n^{2/3} 2^{-38/3}.\end{align*}
Since $n^{2/3} 2^{-38/3}\ge 1$ for $n\ge 2^{19}$, we get the result.
\end{proof}

\begin{lemma} Let $d,t,$ and $n$ be natural numbers.  Then 
$$\#T(d,64\cdot 2^t n)\ge (512\cdot 2^t n) \#T(d-1,n)^{2^t}$$ whenever $n\ge 2^{19}$.
\label{growth}
\end{lemma}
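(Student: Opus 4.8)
The plan is to prove Lemma \ref{growth} by induction on $t$, using Lemma \ref{lem:512} as both the base case and the engine of the inductive step. For the base case $t=0$, the claim reads $\#T(d,64n)\ge (512n)\#T(d-1,n)^{2^0} = (512n)\#T(d-1,n)$, which is weaker than Lemma \ref{lem:512} (since $\#T(d-1,n)\ge n+1\ge 1$, we have $\#T(d-1,n)^2\ge \#T(d-1,n)$), so the base case is immediate for $n\ge 2^{19}$.

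For the inductive step, assume the statement holds for $t$; we want it for $t+1$, i.e.
$$\#T(d,64\cdot 2^{t+1}n)\ge (512\cdot 2^{t+1}n)\#T(d-1,n)^{2^{t+1}}$$
for $n\ge 2^{19}$. First I would apply Lemma \ref{lem:512} with $n$ replaced by $2^{t+1}n$ (which is still $\ge 2^{19}$), giving
$$\#T(d,64\cdot 2^{t+1}n)\ge (512\cdot 2^{t+1}n)\,\#T(d-1,2^{t+1}n)^2.$$
Then I would bound $\#T(d-1,2^{t+1}n)$ from below. Writing $2^{t+1}n = 64\cdot 2^t\cdot m$ where $m = n/64$ — this only works if $64\mid n$, which is awkward — so instead I would use monotonicity in the second argument: by Lemma \ref{lem3} (the $\#T(0,N)\ge\#T(0,n)$ clause and, more relevantly, the first clause, or simply the obvious fact that $T(d',N)\supseteq T(d',n)$ as sets when $N\ge n$ for the relevant range) we have $\#T(d-1,2^{t+1}n)\ge \#T(d-1,64\cdot 2^t n)$ provided $2^{t+1}n\ge 64\cdot 2^t n$, i.e. $2n\ge 64n$, which is false. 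So that direction is wrong; I actually want $2^{t+1}n$ to be the \emph{larger} index.

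The correct bookkeeping: apply the inductive hypothesis to get $\#T(d,64\cdot 2^t n)\ge (512\cdot 2^t n)\#T(d-1,n)^{2^t}$, and separately I need to relate $\#T(d,64\cdot 2^{t+1}n)$ to $\#T(d-1,\cdot)^2$ of an intermediate term. The clean route is: apply Lemma \ref{lem:512} with its ``$n$'' set to $2^{t+1}n$, yielding $\#T(d,64\cdot 2^{t+1}n)\ge 512\cdot 2^{t+1}n\cdot \#T(d-1,2^{t+1}n)^2$; then observe that since $2^{t+1}n\ge 64\cdot 2^t m$ is not directly available, I instead invoke the inductive hypothesis in the form $\#T(d-1,64\cdot 2^t n)\ge (512\cdot 2^t n)\#T(d-2,n)^{2^t}$ — but this introduces $d-2$, not what we want. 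The resolution that actually matches the exponent $2^{t+1} = 2\cdot 2^t$ is: \emph{induct so that the squaring from Lemma \ref{lem:512} doubles the exponent at each stage}. Concretely, set up the induction to prove $\#T(d,64\cdot 2^t n)\ge (512\cdot 2^t n)\#T(d-1,n)^{2^t}$ by: at step $t+1$, write $64\cdot 2^{t+1}n = 64\cdot(2^{t}\cdot 2n)$ and apply Lemma \ref{lem:512} with $n\rightsquigarrow 2^t\cdot 2n$? That gives one squaring but shifts the index to $2^t\cdot 2n$ on the right. Then apply the inductive hypothesis for parameter $t$ at... this is circular in $d$.

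The genuinely correct and simplest plan, which I would commit to: prove it by induction on $t$ where the inductive step chains Lemma \ref{lem:512} with the hypothesis at level $t$ \emph{without decreasing $d$ twice} — namely, apply Lemma \ref{lem:512} to conclude $\#T(d,64\cdot 2^{t+1} n) = \#T(d, 64\cdot(2\cdot 2^t n))\ge \#T(d,64\cdot 2^t n)$ is useless; rather, rewrite $64\cdot 2^{t+1}n = 64\cdot(2^{t+1}n)$ and set $M := 2^{t+1}n\ge 2^{19}$, giving $\#T(d,64 M)\ge 512 M\,\#T(d-1,M)^2$, then apply the inductive hypothesis \emph{to bound $\#T(d-1,M)$ from below}: since $M = 2^{t+1}n \ge 64\cdot 2^t\cdot(n/64)$ requires $64\mid n$, I will instead first reduce to the case $n\ge 2^{19}$ being a power of $2$ times a fixed constant, or more robustly, strengthen the induction so monotonicity (Lemma \ref{lem3}, first clause with $d$ replaced appropriately, or the trivial containment $T(d',n)\subseteq T(d',N)$ for $N\ge n$ when $d'\ge 1$, which holds since any monomial of length $n$ in $T(d',n)$ extends to one of length $N$ by lengthening the final $x^j$) lets me pass from $2^{t+1}n$ down to $64\cdot 2^t n$ — wait, $2^{t+1}n < 64\cdot 2^t n$, so monotonicity gives $\#T(d-1,2^{t+1}n)\le \#T(d-1,64\cdot 2^t n)$, the wrong inequality. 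Therefore the only consistent reading is that the intended proof applies Lemma \ref{lem:512} at the \emph{outer} scale and the inductive hypothesis at the \emph{inner} scale with the index $64\cdot 2^{t+1}n/64 = 2^{t+1}n$ matching $64\cdot 2^t\cdot(n/32)$... The cleanest fix: run the induction proving $\#T(d,64\cdot 2^t n)\ge (512\cdot 2^t n)\#T(d-1,n)^{2^t}$ where the step from $t$ to $t+1$ uses: $\#T(d,64\cdot 2^{t+1}n)\ge 512\cdot 2^{t+1}n\cdot\#T(d-1,2^{t+1}n)^2$ by Lemma \ref{lem:512} with its variable $=2^{t+1}n$; then since $2^{t+1}n\ge 64\cdot 2^{t-5}n = 64\cdot 2^{t-1}\cdot(n/16)$, apply the inductive hypothesis at $t-1$ with variable $n/16$ — but the indices no longer land on $n$.

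The main obstacle, and what I would resolve carefully, is exactly this index-chasing: making the scale factor $64\cdot 2^t$ on the left-hand side consistent with one application of Lemma \ref{lem:512} (which multiplies the scale by $64$ and squares, turning $\#T(d-1,\cdot)^{2^{t-1}}$ into $\#T(d-1,\cdot)^{2^t}$) composed with the inductive hypothesis. The correct chaining is: $\#T(d, 64\cdot 2^t n)$, viewing $64\cdot 2^t n = 64\cdot(2^t n)$, apply Lemma~\ref{lem:512} with variable $2^t n \ge 2^{19}$: $\#T(d,64\cdot 2^t n)\ge (512\cdot 2^t n)\#T(d-1,2^t n)^2$. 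Now by the inductive hypothesis (on $t$, with $t-1$) applied to $d-1$... still wrong $d$. Hence I conclude the \emph{actual} intended argument must not decrease $d$, so Lemma~\ref{lem:512} must be applied with the \emph{same} $d$ repeatedly and the exponent on $\#T(d-1,\cdot)$ grows because $\#T(d-1,2^t n)\ge \#T(d-1,n)^{?}$ — and indeed Lemma~\ref{unequal} gives $\#T(d-1,2m)\ge \#T(d-1,m)^2/m$, so iterating, $\#T(d-1,2^t n)\gtrsim \#T(d-1,n)^{2^t}/(\text{poly in }n,2^t)$. So the real plan is: (1) apply Lemma~\ref{lem:512} once to get $\#T(d,64\cdot 2^t n)\ge 512\cdot 2^t n\cdot \#T(d-1,2^t n)^2$; (2) iterate Lemma~\ref{unequal} $t$ times (as in the proof of Lemma~\ref{lem:512}) to get $\#T(d-1,2^t n)\ge \#T(d-1,n)^{2^t}\cdot n^{-c_t}$ for an explicit $c_t$; (3) combine, and use that the power-of-$2$ savings $2^t n$ out front and the growth lower bound $\#T(d-1,n)\ge n+1$ absorb the polynomial loss $n^{-c_t}$ for $n\ge 2^{19}$, exactly as in Lemma~\ref{lem:512}. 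The main obstacle is verifying that the accumulated polynomial-in-$n$ denominator from $t$ iterations of Lemma~\ref{unequal} is dominated by the gained factors, uniformly in $t$ — this requires tracking the exponents carefully (the denominator from $t$ iterations is roughly $n^{2^{t}-1}$-ish before cancellation, but each Lemma~\ref{unequal} step only loses one factor of the current index, so the total loss telescopes to something like $\prod_{j=0}^{t-1}(2^j n)^{2^{t-1-j}} = n^{2^t-1}2^{\sum j 2^{t-1-j}}$, which must be beaten by $(\#T(d-1,n))^{2^t}\ge (n+1)^{2^t}$ times the $512\cdot 2^t n$ prefactor); I expect this to work since $(n+1)^{2^t}/n^{2^t-1}\ge n$ and the remaining $2$-power factors are controlled by $n\ge 2^{19}$, mirroring the estimate already carried out in Lemma~\ref{lem:512}.

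\begin{proof}[Proof sketch]
We argue by induction on $t$, the case $t=0$ being Lemma~\ref{lem:512} (indeed weaker, since $\#T(d-1,n)\ge 1$). For the inductive step, apply Lemma~\ref{lem:512} with $n$ replaced by $2^t n\ge 2^{19}$ to obtain
$$\#T(d,64\cdot 2^{t+1}n)=\#T\bigl(d,64\cdot(2\cdot 2^t n)\bigr)\ge (512\cdot 2^{t+1}n)\,\#T(d-1,2^t n)^2.$$
Next, iterate Lemma~\ref{unequal} $t$ times, starting from $\#T(d-1,2n)\ge \#T(d-1,n)^2/n$ and using at each stage Lemma~\ref{base} to keep the hypothesis $\#T(d-1,2^j n)\ge (2^j n)^{4/3}$ in force (valid for $n\ge 2^{19}\ge 512$), to get a bound of the form
$$\#T(d-1,2^t n)\ge \#T(d-1,n)^{2^t}\cdot n^{-c_t}2^{-e_t}$$
for explicit exponents $c_t,e_t$, exactly as in the proof of Lemma~\ref{lem:512}. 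Substituting and using $\#T(d-1,n)\ge n+1$ to absorb the polynomial and $2$-power losses (which is where $n\ge 2^{19}$ is used) yields
$$\#T(d,64\cdot 2^{t+1}n)\ge (512\cdot 2^{t+1}n)\,\#T(d-1,n)^{2^{t+1}},$$
completing the induction.
\end{proof}
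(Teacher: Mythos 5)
Your base case is fine, but the inductive step has a genuine gap, and it is not the paper's argument. Note first that your "induction" never actually invokes the inductive hypothesis: the step is a direct argument consisting of one application of Lemma \ref{lem:512} at the top scale (incidentally, to land on $\#T(d,64\cdot 2^{t+1}n)$ you must take the variable in Lemma \ref{lem:512} to be $2^{t+1}n$, not $2^t n$, and then use monotonicity in the length) followed by $t$ iterations of Lemma \ref{unequal}. After that first application, what you need is $\#T(d-1,2^{t+1}n)^2\ge \#T(d-1,n)^{2^{t+1}}$, i.e.\ $\#T(d-1,2^{t+1}n)\ge \#T(d-1,n)^{2^{t}}$, and your plan is to get it from the iterated doubling bound $\#T(d-1,2^{t}n)\ge \#T(d-1,n)^{2^{t}}n^{-c_t}2^{-e_t}$ with $c_t=2^t-1$, absorbing the loss via $\#T(d-1,n)\ge n+1$. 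This absorption is unsound: the factor $\#T(d-1,n)^{2^{t}}\ge (n+1)^{2^t}$ is precisely the quantity that must survive into the final bound, so it is not available to pay for the accumulated loss $n^{2^t-1}2^{e_t}$ (your check "$(n+1)^{2^t}/n^{2^t-1}\ge n$" conflates the absorption budget with the target: after paying, you are left with a factor of about $n$, not $\#T(d-1,n)^{2^t}$). The loss is real and large: unlike Lemma \ref{lem:512}, where only four doublings occur and a fixed power $(4n)^{56/3}$ coming from Lemma \ref{base} covers a fixed loss $2^{41}n^{15}$, here the number of doublings is $t$, the loss compounds to roughly $n^{2^t}$, and there is no uniform-in-$t$ reserve: in the admissible extreme $d=n$ one has $\#T(d-1,n)=n+1$ exactly, so there is no slack at all, and the inequality you need is far beyond what the lossy iteration delivers.

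The paper avoids any accumulation by making the large linear prefactor part of the statement being inducted on. Suppose $\#T(d,64\cdot 2^{s-1}n)\ge 512\cdot 2^{s-1}n\cdot \#T(d-1,n)^{2^{s-1}}$. One application of Lemma \ref{unequal} at the same $d$ (doubling $64\cdot 2^{s-1}n$ to $64\cdot 2^{s}n$) gives
\begin{equation*}
\#T(d,64\cdot 2^{s}n)\ \ge\ \frac{\#T(d,64\cdot 2^{s-1}n)^2}{64\cdot 2^{s-1}n}\ \ge\ \frac{\bigl(512\cdot 2^{s-1}n\bigr)^2\#T(d-1,n)^{2^{s}}}{64\cdot 2^{s-1}n}\ =\ 2^{12}\cdot 2^{s-1}n\cdot \#T(d-1,n)^{2^{s}}\ \ge\ 512\cdot 2^{s}n\cdot \#T(d-1,n)^{2^{s}},
\end{equation*}
so the single loss factor $64\cdot 2^{s-1}n$ at each step is paid immediately by the square of the carried prefactor, with constant slack, and no $n$-power debt ever builds up. Your sketch, as written, cannot be completed without reorganizing it along these lines.
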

\begin{proof}
We prove this by induction on $t$.  When $t=0$, we see that the claim holds when $n\ge 2^{19}$ by Lemma \ref{lem:512}.  Now suppose that the desired inequality holds for $t<s$ for $n\ge 2^{19}$ and consider the case when $t=s$ and $n\ge 2^{19}$.  Then using Lemma \ref{base}, Lemma \ref{unequal}, and the induction hypothesis, we have
\begin{align*}
\#T(d,64\cdot 2^s n) &\ge  \#T(d,64 \cdot 2^{s-1} n)^2/(64 \cdot 2^{s-1} n) \\
&\ge  \frac{ \left(512\cdot 2^{s-1} n \cdot \#T(d-1,n)^{2^{s-1}}\right)^2}{64\cdot 2^{s-1} n} \\
&\ge  \frac{2^{18} \cdot 2^{2s-2} n^2 \cdot \#T(d-1,n)^{2^s}}{64\cdot 2^{s-1} n} \\
&  \ge   2^{12} \cdot 2^{s-1} n \cdot \#T(d-1,n)^{2^s}\\
&\ge 512 \cdot 2^s n \cdot \#T(d-1,n)^{2^s}.
\end{align*}
The result now follows by induction.
\end{proof}
\section{A general construction and proof of Theorem \ref{thm: main}}
In this section we give a construction that allows us to prove Theorem \ref{thm: main}. Specifically, we let
$f:\mathbb{N}\to \mathbb{N}$ be a sequence with the following properties:
\begin{enumerate}
\item[(1)] $f(m)\le f(n)^2$ for $m\in \{n,n+1,\ldots ,2n\}$;
\item[(2)] $f(n)\ge n+1$ for all $n$.
\end{enumerate}
Then we show that if $F$ is the global counting function of $f$, that is,
\begin{equation}
F(n)= f(0)+\cdots +f(n),
\end{equation}
then there is a hereditary language whose growth function is asymptotically equivalent to $F$.  To do this, we need one remark.
\begin{remark}
\label{rem} Item (1) implies that $f(i)\le f(p)^{2^j}$ for $2^{j-1}p \le i \le 2^j p$.
\end{remark}
\begin{proof} Notice that (1) gives that $f(i)\le f(2^{j-1}p)^2$.  Now $f(2^{j-1}p)\le f(2^{j-2}p)^2$ by (1), and so by induction, we see 
$f(2^{j-1}p)\le f(p)^{2^{j-1}}$ and so $$f(i)\le  \left(f(p)^{2^{j-1}}\right)^2=f(p)^{2^j},$$ as required.
\end{proof}

  Then we show that there is a graded $k$-algebra $A=\bigoplus A_n$ with $A_0=k$ and generated in degree $1$ such that $${\rm dim}(A_n)+\cdots + {\rm dim}(A_0) \sim F(n).$$
  (We recall that $\sim$ here represents asymptotic equivalence and not the usual ``asymptotic to'' in analysis.)
It will be clear from the construction that $A$ is a semigroup algebra of a monomial semigroup and therefore corresponds to a hereditary language.

We remark that we may assume without loss of generality that $F(0)=1$ and $F(1)=3$.  
We now recursively define a weakly increasing sequence of natural numbers $d_n$, a subset $X$ of the natural numbers, a function $a:\mathbb{N}\to \mathbb{N}$ whose global counting function is bounded above by $F$, and a sequence of positive integers $e_n$.  We pick $N\ge 2^{19}$ so that the relevant inequalities from the statements of the lemmas in the preceding section hold for $n\ge N$.  Let $d_n=n$ for $n\le N$, let $e_n=n$ for $n\le N$, we let $a(n)=n+1$ for $n\le N$, and we declare that $X$ contains no elements of size less than $N$.  Letting $A(n)$ denote the global counting function of $a(n)$, we see that $A(j)\le F(j)$ for $j\le N$, as $f(j)\ge j+1$ for all $j$.  

Now suppose that $d_i, e_i$, $a(i)$, and $X\cap [0,i]$ have been defined for $i<n$ for some $n> N$ in such a way that $A(n-1)\le F(n-1)$. Then we take $d_n$ to be the smallest natural number $d$ in $\{d_{n-1},\ldots ,n\}$ such that 
$$A(n-1)+\#T(d,n)\le F(n).$$  Notice that
$F(n)-A(n-1)\ge F(n)-F(n-1) = f(n)\ge n+1=\#T(n-1,n)$, and thus $d_n$ exists.
If $d_n>d_{n-1}$, we declare that $n\in X$; otherwise, $n\not\in X$.

We now define $e_n$.  If $n\in X$, then since $x^n=x^n T(d_n-1,0)\in T(d_n,n)$, there is some smallest $e\ge 0$ such that
\begin{equation}
A(n-1)+\#\left(T(d_n,n)\cup x^eT(d_n-1,n-e)\right) \le F(n).
\end{equation}
  We let $e_n$ denote this smallest value of $e$.  We observe that 
$e_n\ge 1$ since $$A(n-1)+\#\left(T(d_n,n)\cup T(d_n-1,n)\right) = A(n-1)+\#T(d_n-1,n) > F(n),$$ and since $d_n>d_{n-1}$.  
Alternatively, if $n\not\in X$, then we let $e_n$ be the smallest $$e\in \{e_{n-1}+1, e_{n-1}+2,\ldots ,n\}$$ such that
$A(n-1)+\#\left(T(d_n,n)\cup x^e\#T(d_n-1,n-e)\right) \le F(n)$. Again, since $$A(n-1)+\#\left(T(d_n,n)\cup x^n\#T(d_n-1,0)\right) \le F(n),$$ we have that $e_n$ exists.

We make the important remark that 
\begin{equation} 
\label{eq:Xplus}
A(p-1)+\#\left(T(d_p,p)\cup x^{e_{p}-1}\#T(d_p-1,p-e_p+1)\right) > F(p)~~ {\rm for}~p\not\in X, {\rm~when}~e_p>e_{p-1}+1.
\end{equation}

Finally, we take \begin{equation}
a(n)=\#(T(d_n,n)\cup x^{e_n}T(d_n-1,n-e_n)).\end{equation}

Observe that
\begin{equation}
a(n)\ge n+1\qquad {\rm for~all}~n\ge 0
\end{equation}
and
\begin{equation}
A(p-1)+\#T(d_p-1,p)>F(p)\qquad {\rm for}~p\in X.
\end{equation}
Furthermore, since $F(p)-A(p-1)\ge F(p)-F(p-1)=f(p)$, we see that
\begin{equation}
f(p)<\#T(d_p-1,p)\qquad {\rm for}~p\in X.
\label{eq:pinX}
\end{equation}
For our purposes, we may assume that $d_n\to \infty$, since if $d_n$ is eventually equal to a constant $d$, then $\#T(d,n)$ grows exponentially in $n$, and so we get that $F(n)$ is asymptotically equivalent to $2^n$ in this case.  The function $2^n$ is asymptotically equivalent to the growth of the free monoid on two generators, and so we are done in this case.
Then by definition 
\begin{equation} \label{eq:X}
X=\{n\colon d_{n-1}<d_n\},
\end{equation} and as we have just remarked, we may assume without loss of generality that $X$ is infinite.

Now we are ready to construct our algebra.
We first let $S$ denote the collection of words that do not occur as a subword of a word in the union
$$\bigcup_{n\ge 1} T(d_n,n).$$  Then we let 
$S'$ denote the elements 
$$\bigcup_{n\ge 1} x^{e_n} T(d_n-1, n-e_n).$$  Then we take $I$ to be the two-sided ideal of $k\{x,y\}$ generated by all words not in the union of $S\cup S'$ and let $R=k\{x,y\}/I$.  We observe that since 
$d_n$ is weakly increasing and since subwords of length $p-1$ of $T(d_p,p)$ are in $T(d_{p-1},p-1)$.  Similarly, if $p\not\in X$ then $e_p>e_{p-1}$ and $d_p=d_{p-1}$,  and so subwords of length $p-1$ of $x^{e_p} T(d_p-1, p-e_p)$ are contained $x^{e_{p-1}} T(d_{p-1},p-1-e_{p-1})$; if, on the other hand, $p\in X$, then subwords of $x^{e_p}T(d_p-1,p-e_p)$ are contained in
$T(d_{p-1},p-1)$, since $d_p-1\ge d_{p-1}$ in this case.  Hence we see that $S\cup S'$ is closed under the process of taking subwords, and so the dimension of the homogeneous component of $R$ of degree $n$ is simply the number of words of length $n$ in $S\cup S'$.

In particular, by construction, the dimension of the homogeneous piece of $R$ of degree $n$ is equal to $a(n)$; that is,
\begin{equation}
\label{eq:an}
a(n) = \#\left( T(d_n,n)\cup x^{e_n} T(d_n-1,n-e_n) \right) \end{equation} and by construction
\begin{equation} A(n) \le F(n)\qquad {\rm for~all}~n,
\end{equation} 
and $A(n)$ is the growth function of the hereditary language constructed above.  

We now make the following remark.  We define
\begin{equation}
Y:= X\cup \{n\colon n\not\in X~{\rm and} ~e_n>e_{n-1}+1.
\end{equation}
\begin{lemma} For $n\in Y$ we have the inequality
$$F(n) \le  A(n) +  \#T(d_n-1, n-e_n-d_n+1).$$
\label{prop:AF}
\end{lemma}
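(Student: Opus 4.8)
The plan is to combine the minimality built into the definition of $e_n$ with a monomial-counting comparison of the two sets $T(d_n,n)\cup x^{e_n-1}T(d_n-1,n-e_n+1)$ and $T(d_n,n)\cup x^{e_n}T(d_n-1,n-e_n)$. First I would record the single inequality that makes $Y$ the right index set: for every $n\in Y$,
$$F(n) < A(n-1)+\#\bigl(T(d_n,n)\cup x^{e_n-1}T(d_n-1,n-e_n+1)\bigr)\qquad(\star)$$
If $n\in X$ this is just the statement that $e=e_n-1$, a legitimate value since $e_n\ge 1$ for $n\in X$, fails the defining inequality of $e_n$; if $n\in Y\setminus X$, so that $e_n>e_{n-1}+1$, it is precisely \eqref{eq:Xplus}. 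Granting $(\star)$, the lemma reduces to the purely combinatorial estimate
$$\#\bigl(T(d_n,n)\cup x^{e_n-1}T(d_n-1,n-e_n+1)\bigr)\ \le\ a(n)+\#T(d_n-1,n-e_n-d_n+1),$$
since then $(\star)$, together with \eqref{eq:an} and $A(n)=A(n-1)+a(n)$, gives $F(n)<A(n)+\#T(d_n-1,n-e_n-d_n+1)$, which is even slightly stronger than what is asserted.

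To prove that estimate I would argue monomial by monomial. Write a typical element of $x^{e_n-1}T(d_n-1,n-e_n+1)$ as $x^{e_n-1}w$ with $w\in T(d_n-1,n-e_n+1)$, and split on how $w$ begins. If $w=x^{n-e_n+1}$ then $x^{e_n-1}w=x^n\in T(d_n,n)$. If $w$ begins with $x$ and contains a $y$, write $w=x^iy\cdots$ with $i\ge1$; then $x^{e_n-1}w=x^{e_n}\cdot(x^{i-1}y\cdots)$ with $x^{i-1}y\cdots\in T(d_n-1,n-e_n)$, so the monomial lies in $x^{e_n}T(d_n-1,n-e_n)$. If $w$ begins with $y$ and has only one $y$, then $w=yx^{n-e_n}$ and $x^{e_n-1}w=x^{e_n-1}yx^{n-e_n}$ has no interior block, hence lies in $T(d_n,n)$. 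This leaves exactly the set $M$ of monomials $x^{e_n-1}w$ with $w$ beginning with $y$ and containing at least two copies of $y$. For such $w\in T(d_n-1,n-e_n+1)$ the $x$-block following the leading $y$ is an interior block, so it has length $\ge d_n-1$; thus $w=yx^{d_n-1}v$ for a unique $v\in T(d_n-1,n-e_n-d_n+1)$, and $x^{e_n-1}w\mapsto v$ is an injection $M\hookrightarrow T(d_n-1,n-e_n-d_n+1)$ (when $n-e_n-d_n+1<0$ there is simply no room for two copies of $y$, so $M=\emptyset$). Combining, $x^{e_n-1}T(d_n-1,n-e_n+1)\subseteq\bigl(T(d_n,n)\cup x^{e_n}T(d_n-1,n-e_n)\bigr)\cup M$; taking cardinalities, using subadditivity and \eqref{eq:an}, yields the displayed estimate.

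I expect the main obstacle to be exactly the case analysis in the second paragraph: one must account, within the union with $T(d_n,n)$, for precisely which new monomials are created when the prefix shrinks from $x^{e_n}$ to $x^{e_n-1}$ — equivalently, when the interior-block threshold drops from $d_n$ to $d_n-1$ — and check that stripping off the prefix $x^{e_n-1}yx^{d_n-1}$ really does land inside $T(d_n-1,n-e_n-d_n+1)$, the point being that the leading $x$-block of the image becomes a free boundary block and is thus unconstrained. Everything else is bookkeeping with \eqref{eq:Xplus}, \eqref{eq:an}, and $A(n)=A(n-1)+a(n)$.
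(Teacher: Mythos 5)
Your proof is correct and follows essentially the same route as the paper: the strict inequality from the minimality of $e_n$ (or from \eqref{eq:Xplus} when $n\in Y\setminus X$) combined with the observation that passing from the prefix $x^{e_n}$ to $x^{e_n-1}$ creates at most $\#T(d_n-1,n-e_n-d_n+1)$ new monomials, namely those of the form $x^{e_n-1}yx^{d_n-1}v$. Your case analysis is in fact a bit more explicit than the paper's (it handles the all-$x$ and single-$y$ words, which the paper's phrasing glosses over by absorbing them into $T(d_n,n)$), but the argument is the same.
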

\begin{proof}
For $n\in X$ we have by definition of $d_n$ and $e_n$ that 
\begin{equation}
\label{eq: FT}
F(n) \le A(n-1) + \#\left( T(d_n,n)\cup x^{e_n-1} T(d_n-1, n-e_n+1)\right),
\end{equation}
and the same inequality holds for $n\not\in X$ when $e_n>e_{n-1}+1$ by Equation (\ref{eq:Xplus})
Notice that a word in $T(d_n-1, n-e_n+1)$ either begins with an $x$ or a $y$ and if we have a $y$ we must have at least $d_n-1$ copies of $x$ immediately afterwards and so 
$$\#\left( T(d_n,n)\cup x^{e_n-1} T(d_n-1, n-e_n+1)\right)$$ is less than or equal to
$$ \#\left( T(d_n,n)\cup x^{e_n} T(d_n-1, n-e_n)\right) +\#x^{e_n-1} y x^{d_n-1} T(d_n-1, n-e_n-d_n+1).$$
Combining this fact with Equation (\ref{eq: FT}), we see that we have the inequality
\begin{equation}
\label{eq: FT2}
F(n) \le A(n-1)+ a(n)  + \#T(d_n-1, n-e_n-d_n+1)= A(n) +  \#T(d_n-1, n-e_n-d_n+1)
\end{equation}
whenever $n\in X$ or when $n\not\in X$ but $e_n>e_{n-1}+1$.
\end{proof}
We also need a key technical lemma for our analysis.
\begin{lemma}
\label{lem:AA}
Let $N$ be a natural number such that $d_N\ge 5$ and suppose that $i\in Y$ for $i\in \{N,\ldots ,4N\}$ and that, moreover, for every $i\in \{N,\ldots ,4N-1\}$ we have either $i\in X$ or $e_{i+1}\ge e_i+d_i$.
Then $F(N)\le A(4N)$. 
\end{lemma}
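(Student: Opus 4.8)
First I would reduce, via Lemma~\ref{prop:AF}, to a single inequality. Since $N\in Y$, that lemma gives $F(N)\le A(N)+\#T(d_N-1,\,m_N)$ with $m_N:=N-e_N-d_N+1$ (read as $0$ when negative, consistently with the proof of Lemma~\ref{prop:AF}), so because $A$ is weakly increasing it suffices to prove $\#T(d_N-1,m_N)\le A(4N)-A(N)=\sum_{j=N+1}^{4N}a(j)$. The case $d_N\ge N/2$ is immediate: then $m_N\le N-d_N\le d_N$, so no block $x^{d_N-1}$ can sit inside a word of length $m_N$, whence $\#T(d_N-1,m_N)=m_N+1\le N/2+1<N+2\le a(N+1)$. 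So from now on $d_N<N/2$, which keeps us, at least on the initial stretch of the range where $d_j=d_N$, inside the hypotheses $d<n$ of the \S3 lemmas.

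Next I would use the $e$-growth hypothesis to pin down the structure of $X$ on $\{N,\dots,4N\}$. Since $e_n\le n$ always, and any non-$X$ index $i$ in the range satisfies $e_{i+1}\ge e_i+d_i\ge e_i+5$, chaining through a run of consecutive non-$X$ indices from a point $p$ up to $M$ forces $M\le\tfrac54 p+1$; hence there is always an element of $X$ within $\tfrac54 N+O(1)$ of $N$, and consecutive elements of $X$ in the range grow by at most the factor $\tfrac54$, so elements of $X$ are forced to be fairly dense. Two consequences: first, $d_{4N}$ is substantially larger than $d_N$; second — and this is the main line — when the last element $q^\ast$ of $X$ in $(N,4N]$ is not too close to $4N$, the run of non-$X$ indices above it pushes $e_{4N}$ all the way up toward $4N$, which together with $d_{4N}\ge 5$ makes $m_{4N}<0$, so Lemma~\ref{prop:AF} applied at $4N$ gives $\delta_{4N}:=\#T(d_{4N}-1,m_{4N})=0$ and hence $F(4N)\le A(4N)$, a fortiori $F(N)\le A(4N)$. (Equivalently one can chain Lemma~\ref{prop:AF} along the whole range, using the hypothesis at each index to move the defect forward one step.)

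The remaining configurations are those in which the top of the range is occupied by elements of $X$, so the $e$-growth has no room to act; there I would return to the displayed inequality and bound $\#T(d_N-1,m_N)$ directly against the increments $a(j)$. One uses $a(j)\ge\#T(d_j,j)$, with $d_j=d_N$ on the initial segment $\{N,\dots,q_1-1\}$ and $q_1\le\tfrac54 N+O(1)$, so some increment has size $\#T(d_N,j)$ for $j$ a definite fraction past $N$; then Lemma~\ref{unequal} (doubling the length squares the $\#T$-count), Lemma~\ref{lem3} (reindexing from $d_N-1$ to $d_N$ costs only a polynomial factor), and Lemmas~\ref{base} and~\ref{growth} (quantitative slack over factor-$4$ and factor-$64\cdot 2^t$ stretches) combine to show the doubling comfortably beats the polynomial losses, so a single well-placed $a(j)$ already exceeds $\#T(d_N-1,m_N)$; where $d$ has already jumped one uses instead the sharper identity $a(j)=\#T(d_j,j)+\bigl(\#T(d_j-1,j-e_j)-\#T(d_j,j-e_j)\bigr)$, whose second term is large precisely when $e_j$ is small. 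The real obstacle, I expect, is the bookkeeping that glues these cases together: tracking how $m_n$ moves across the range (it drops by at least $d_n-1$ at each non-$X$ step but can rebound when $e$ resets at an element of $X$), and excluding the apparent bad case in which $m_N\approx N$, $d_N$ is small, and the elements of $X$ are simultaneously too sparse to have grown $d_{4N}$ — which must be ruled out using the granularity built into the construction (minimality in the choice of $d_n$, Equation~\eqref{eq:Xplus}, and the near-saturation $a(j)\approx F(j)-A(j-1)$).
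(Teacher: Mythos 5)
Your reductions handle two genuinely easy sub-cases correctly (the case $d_N\ge N/2$, and the case where the last element of $X$ in $(N,4N]$ is low enough that the hypothesis $e_{i+1}\ge e_i+d_i$ pushes $e_{4N}$ up, kills the defect term at $4N$, and gives $F(4N)\le A(4N)$ directly), but the residual case is precisely the hard one, and there the proposal has a genuine gap in two places. First, your opening reduction fixes the defect at $N$ and tries to prove $\#T(d_N-1,m_N)\le A(4N)-A(N)$; this is potentially strictly stronger than the lemma, and it throws away exactly the mechanism the paper uses, namely applying Lemma \ref{prop:AF} at \emph{every} $i\in\{N,\ldots,4N\}$ via $F(N)\le F(i)\le A(i)+\#T(d_i-1,i-e_i-d_i+1)$, so that once $d_i$ has grown large the defect at that $i$ is trivially small (at most $4N+1\le a(4N)$) even though the defect at $N$ may be enormous. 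Second, your route to absorbing the defect at $N$ misreads the direction of your own density bound: the run-length estimate gives an \emph{upper} bound $q_1\lesssim \tfrac54 N$ on the first element of $X$ past $N$, not a lower bound, so there may be no index $j>N$ with $d_j=d_N$ at all (e.g.\ $q_1=N+1$), and then $a(j)\ge\#T(d_j,j)$ points the wrong way because $\#T$ decreases in $d$; the fallback via the identity $a(j)=\#T(d_j,j)+\#T(d_j-1,j-e_j)-\#T(d_j,j-e_j)$ is stated but never estimated, and the final paragraph explicitly defers the bad case ($m_N\approx N$, $d_N$ small, $X$ awkwardly placed) to unspecified ``granularity'' of the construction.

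That deferred case is where all the work lies, and the paper closes it with a quantitative argument absent from your sketch: assume for contradiction that at every $i$ with $i+1\notin X$ the length condition of Lemma \ref{lem3}, namely $i+1\ge 2+(i-e_i-d_i+1)(d_i+1)/d_i$, fails (if it ever holds, the defect at $i$ is absorbed into $a(i+1)$ because $d_{i+1}=d_i$, and one is done). Feeding the hypothesis $e_{i+1}\ge e_i+d_i$ into the failed inequality at $i=r_{j+1}-2$ yields the spacing bound $r_{j+1}-r_j-1<(r_j-1)/\bigl((j+5)(j+6)\bigr)$ between consecutive elements $r_j$ of $X$ in the range; telescoping and summing $\sum_j 1/\bigl((j+5)(j+6)\bigr)=1/5$ forces $\ell>11N/5-1$ elements of $X$ in $\{N,\ldots,4N\}$, hence $d_{r_\ell}>11N/5$, whence the defect at $r_\ell$ consists of words with at most one $y$ and has size at most $4N+1\le a(4N)$, a contradiction. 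Without this (or an equivalent mechanism ruling out sparse-but-persistent $X$ near the top of the range while the defect stays large), the proof is incomplete.
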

\begin{proof}
Suppose towards a contradiction that this is not the case.
Let $N< r_1 < r_2 < \cdots < r_{\ell} < 4N$  denote the elements in $\{N+1,\ldots ,4N-1\}\cap X$.  
Then since $d_n$ is a weakly increasing sequence that must jump at each of $r_1,\ldots ,r_{\ell}$, we see that $d_i\ge j+5$ for $i\ge r_{j}$.  Moreover, since $e_{i+1}\ge e_i+d_i$ for $i\in \{r_j,\ldots ,r_{j+1}-1\}$ and $e_{r_j}\ge 1$, we see that 
\begin{equation}
e_{r_{j+1}-2} \ge 1+d_{r_j}(r_{j+1}-r_j-2)
\end{equation}
 for $j=0,1,2,\ldots ,\ell$, whenever $r_{j+1}\ge r_j+2$, where we take $r_0=N$ and $r_{\ell+1}=4N$.
 
By Lemma \ref{prop:AF}, since each $i\in \{N,\ldots ,4N\}$ is in $Y$, we have
$$F(N) \le  F(i) \le A(i) +   \#T(d_i-1, i-e_i-d_i+1)$$ for $i\in \{N,\ldots ,4N\}$.  
Now if there is some $i\in \{N,\ldots ,4N-1\}$ such that $i+1\not\in X$ and $i+1\ge 2+ (i-e_i-d_i+1)(d_i+1)/d_i$ then by Lemma \ref{lem3}, we have
$$ \#T(d_i-1, i-e_i-d_i+1)\le \#T(d_i,i+1)=\#T(d_{i+1},i+1)\le a(i+1)$$ and so we see
$$F(N)\le A(i)+a(i+1)=A(i+1)\le A(4N),$$ a contradiction.

Thus we may assume that we have $i+1<2+ (i-e_i-d_i+1)(d_i+1)/d_i$ whenever $i+1\in \{N+1,\ldots ,4N\}\setminus X$.  
In particular, if $j$ is such that $r_{j+1}>r_j+1$ and we take $i=r_{j+1}-2$, then we have 
$e_{r_{j+1}-2} \ge 1+d_{r_j}(r_{j+1}-r_j-2)$ and $d_{r_{j+1}-2}=d_{r_j}\ge j+5$.  Consequently, for $i=r_j-2$, we have
$$(i-e_i-d_i+1)(d_i+1)/d_i \le  (r_j-1 - (j+5)(r_{j+1}-r_j-1))(j+6)/(j+5).$$  By assumption, we have
$$i+1=r_j-1 <  (r_j-1 - (j+5)(r_{j+1}-r_j-1))(j+6)/(j+5),$$ and so
simplifying this inequality yields
\begin{equation}
r_{j+1}-r_j-1 < \frac{(r_j-1)}{(j+5)(j+6)}
\end{equation}
whenever $r_{j+1}>r_j+1$ for $j=0,\ldots ,\ell$.  We note that it holds trivially when $r_{j+1}=r_j+1$, and so the inequality in fact holds for $j\in \{0,\ldots ,\ell\}$. 

 We now telescope and see
\begin{align*}
4N - N -(\ell+1) = \sum_{j=0}^{\ell} (r_{j+1}-r_j-1) & < \sum_{j=0}^{\ell} r_j/((j+5)(j+6)) \\
&< 4N\sum_{j\ge 0} \frac{1}{(j+5)(j+6)} \\
& = 4N/5.
\end{align*}
Hence we must have that \begin{equation}
\ell > 11N/5-1.\end{equation}  

But this now gives that $d_{r_{\ell}} > 11N/5$ and so if we take $i=r_{\ell}$ and apply Lemma \ref{prop:AF}, we see
$$F(N) \le  F(i) \le A(i) +   \#T(d_{i}-1,i-e_i-d_i+1).$$ 
But $d_i-1>11N/5$ and $i-e_i-d_i+1 \le 4N-11N/5=9N/5$, since $i<4N$.
Thus $T(d_i-1,i-e_i-d_i+1)$ consists of the words on $x$ and $y$ of length $i-e_i-d_i+1$ with at most one $y$ since $d_i-1\ge i-e_i-d_i+1$.  Consequently,
$\#T(d_i-1,i-e_i-d_i+1)\le 4N+1$.  But now by construction $a(n+1)\ge n+1$ for all $n$ and so
$$F(N) \le  F(i) \le A(i) +   \#T(d_{i}-1,i-e_i-d_i+1) \le A(i)+a(4N) \le A(4N),$$
a contradiction.  This completes the proof. 
\end{proof}
We now show that $A(n)$ and $F(n)$ are asymptotically equivalent.  Since $A(n)\le F(n)$, it suffices to show that $A(n)$ asymptotically dominates $F(n)$.\begin{theorem}
For all $n$ sufficiently large we have $F(n)\le A(2^{16}n)$.
\label{thm:asym}
\end{theorem}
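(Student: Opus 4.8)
The plan is a strong induction on $n$. Fix a threshold $n_0$ large enough that all the estimates of \S3 are valid for arguments $\ge n_0$, that $d_m\ge 5$ for $m\ge n_0$, and — using that $X$ is infinite, hence has elements above $2^{19}$ — that $\max(X\cap[0,m])\ge 2^{19}$ for every $m\ge n_0$; assume the conclusion for all $m$ with $n_0\le m<n$. Since $A(m)\le F(m)$ always, only $F(n)\le A(2^{16}n)$ needs proof. Write $p=\max(X\cap[0,n])$ and let $p'$ be the next element of $X$, so that $p\ge 2^{19}$, $d_m=d_p$ on $\{p,\dots,p'-1\}$, and $F(n)=F(p-1)+\sum_{k=p}^{n}f(k)$. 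The induction hypothesis gives $F(p-1)\le A(2^{16}(p-1))$ (for the finitely many small $p$ with $p-1<n_0$ the constant $F(p-1)$ is instead absorbed using that $A(128n-1)-A(2^{16}(p-1))\to\infty$), so it suffices to establish
\[
\sum_{k=p}^{n}f(k)\ \le\ \sum_{k=2^{16}(p-1)+1}^{2^{16}n}a(k)\ =\ A(2^{16}n)-A(2^{16}(p-1)).
\]

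\emph{Long-gap case: $p'>2^{16}n$.} Then $d_k=d_p$ for every $k\in\{128n,\dots,2^{16}n\}$. Starting from the jump inequality $f(p)<\#T(d_p-1,p)$ of \eqref{eq:pinX}, Remark \ref{rem} yields $f(k)<\#T(d_p-1,p)^{2^t}$ whenever $k\le 2^tp$, and Lemma \ref{growth}, applied with $d=d_p$ and base point $p\ge 2^{19}$, turns this into $f(k)<\#T(d_p,64\cdot 2^tp)$. Choosing $t$ minimal with $2^tp\ge n$ gives $64\cdot 2^tp\le 128n$, hence
\[
f(k)\ <\ \#T(d_p,128n)\qquad\text{for every }k\in\{p,\dots,n\};
\]
since $\#T(d_p,\cdot)$ is increasing, $a(k)\ge\#T(d_p,k)\ge\#T(d_p,128n)>\max_{p\le j\le n}f(j)$ for all $k\in\{128n,\dots,2^{16}n\}$. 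As this index set has at least $n+1\ge n-p+1$ elements, a term‑by‑term comparison gives $\sum_{k=p}^{n}f(k)<\sum_{k=128n}^{2^{16}n}a(k)$, and the latter is $\le\sum_{k=2^{16}(p-1)+1}^{2^{16}n}a(k)$ — either because $2^{16}(p-1)<128n$ (when $p$ is small), or, when $p\ge n/512+1$, because then $64\cdot 2^tp\le 128n\le 2^{16}(p-1)$, so one compares $f(j)$ directly against the terms $a(k)$ with $k>2^{16}(p-1)$, each of which is $\ge\#T(d_p,2^{16}(p-1))\ge\#T(d_p,64\cdot 2^tp)>f(j)$.

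\emph{Short-gap case: $p'\le 2^{16}n$}, so there is a jump $q:=p'$ with $n<q\le 2^{16}n$. Here one uses $F(n)\le F(q)$ and Lemma \ref{prop:AF} (legitimate since $q\in X\subseteq Y$): $F(q)\le A(q)+\#T(d_q-1,\,q-e_q-d_q+1)$. If $d_q\ge q/2$, the length argument $q-e_q-d_q+1$ is smaller than $d_q$, so $\#T(d_q-1,\cdot)$ counts only words with at most one $y$ and is $\le q+1\le a(q+1)$, whence $F(n)\le A(q+1)\le A(2^{16}n)$. If $d_q<q/2$, apply Lemma \ref{lem3} to replace $\#T(d_q-1,q-e_q-d_q+1)$ by $\#T(d_q,N)$ for $N$ only a bounded factor larger than $q$, and — if the gap $[q,q'')$ after $q$ is long enough — then by an $a$-value inside that gap; if it is short, recurse on the next jump, each step forcing $d$ to grow, so that after boundedly many steps (bounded in terms of $2^{16}$) one is necessarily in the long-gap case or in the case $d\ge q/2$. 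Finally, the clean sub-case in which $\{n,\dots,4n\}$ already lies in $Y$ with $e_{i+1}\ge e_i+d_i$ off $X$ is handled in one stroke by Lemma \ref{lem:AA}, which gives $F(n)\le A(4n)$ outright.

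I expect the short-gap case to be the main obstacle: the constant-chasing there — chaining Lemma \ref{prop:AF} and Lemma \ref{lem3} across a run of short gaps while keeping every auxiliary index below $2^{16}n$, and verifying that the factor-$2^{16}$ budget is spent no faster than $d$ grows — is where all the delicacy lies. Nothing beyond this requires more than the identity $F(n)=F(p-1)+\sum_{k=p}^{n}f(k)$, the induction hypothesis, and Lemmas \ref{prop:AF}, \ref{lem3}, \ref{growth}, \ref{lem:AA}, and Remark \ref{rem}.
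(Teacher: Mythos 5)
Your ``long-gap'' case is essentially sound and runs parallel to the paper's own Case II (where $[n,512n]\cap Y=\emptyset$): fix the last jump $p\in X$ below the window, use \eqref{eq:pinX}, Remark \ref{rem} and Lemma \ref{growth} to dominate every $f(k)$, $p\le k\le n$, by values $\#T(d_p,\cdot)\le a(\cdot)$ occurring further out, and compare sums; apart from the slightly hand-waved absorption of $F(p-1)$ when $p$ is small, this half would survive scrutiny. (Incidentally, the paper's proof is not an induction on $n$; the induction hypothesis does no real work for you either, since all the trouble lives in the window $[n,2^{16}n]$ above $n$.) The genuine gap is the short-gap case, which is exactly where the theorem's difficulty sits and which you yourself flag as ``where all the delicacy lies.'' The proposed mechanism --- ``recurse on the next jump, each step forcing $d$ to grow, so that after boundedly many steps (bounded in terms of $2^{16}$) one is necessarily in the long-gap case or in the case $d\ge q/2$'' --- cannot work as stated: the number of elements of $X$ in $[n,2^{16}n]$ is not bounded by any absolute constant (the sequence $d_m$ may jump at a positive proportion of integers), and each application of Lemma \ref{lem3} inflates the length parameter by a factor $(d+1)/d$ plus additive constants, so neither termination after $O(1)$ steps nor the requirement that all auxiliary indices stay below $2^{16}n$ is justified. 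What you present for this half is a plan, not a proof.

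The paper resolves precisely this point by a different organization. In its Case I (some element of $Y$ in $[n,512n]$) it first disposes of the one-step situations: if some $p\in[n,2048n-1]\cap Y$ has $p+1\notin Y$, or $p+1\in Y\setminus X$ with $e_{p+1}\le e_p+d_p$, then Lemma \ref{prop:AF} plus the single value $a(p+1)$ already gives $F(n)\le A(2048n)$. Otherwise membership in $Y$, with $e_{i+1}>e_i+d_i$ off $X$, propagates across all of $[512n,2048n]$, and Lemma \ref{lem:AA} applies with $N=512n$. The proof of that lemma is where the constant-chasing you postponed actually happens: assuming no single-step absorption, the inequality $i+1<2+(i-e_i-d_i+1)(d_i+1)/d_i$, combined with $d_i\ge j+5$ after $j$ jumps, telescopes via $\sum_j 1/((j+5)(j+6))=1/5$ to force more than $11N/5-1$ jumps inside $[N,4N]$ --- unboundedly many, which contradicts your ``boundedly many steps'' heuristic --- whence $d>11N/5$, the error term $\#T(d-1,\cdot)$ collapses to at most $4N+1$, and it is absorbed by a single $a$-value. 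So to complete your argument you would either have to reprove the content of Lemma \ref{lem:AA} inside your recursion, or restructure the short-gap case as the paper does: split along density of $Y$ rather than along the length of the next $X$-gap, propagate $Y$ forward, and invoke Lemma \ref{lem:AA} on a subinterval of $[n,2^{16}n]$ instead of chasing individual jumps.
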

\begin{proof} We may assume that $d_n\to \infty$ since otherwise $F(n)$ and $A(n)$ are both asymptotically equivalent to $2^n$.
We now assume that $n$ is sufficiently large that $d_n\ge 5$ and we divide the proof into two cases.

\emph{Case I.} There is some $p\in [n,512n]\cap Y$.
\vskip 2mm
If there is some $p\in [n,2048n-1]\cap Y$ such that $p+1\not\in Y$, then 
by Lemma \ref{prop:AF}, Equation (\ref{eq:an}), and the equalities $d_{p+1}=d_p$ and $e_{p+1}= e_p+1$, we see
\begin{align*}
F(n)&\le  F(p)\\
& \le   A(p) + \#T(d_p-1,p-e_p-d_p+1)\\
& =  A(p) + \#T(d_{p+1}-1, p+1-e_{p+1}-d_{p+1}+1) \\ &\le 
A(p) +\#T(d_{p+1}-1, p+1-e_{p+1})\\
& \le A(p)+a(p+1)\le A(p+1)\le A(2048n).
\end{align*}
Thus we may assume without loss of generality that whenever $p\in [n,2048n-1]\cap Y$, we have $p+1\in Y$. 
Observe, we can say even more when $d_{p+1}=d_p$: in this case, we have $F(p)\le A(p)+\#T(d_{p}-1, p-e_p-d_{p}+1)$ and this is bounded above by 
$A(p+1)$ whenever $p-e_p-d_p+1\le p+1-e_{p+1}$.  Hence we may assume, in addition, that if $p+1\in Y\setminus X$ that we have
$e_{p+1}> e_p+d_p$.  

We are assuming that there is some $p\in [n,512n]\cap Y$, and by the above remark, we may also assume that $[512n,2048n]\subseteq Y$, and that 
$e_{i+1}>e_i+d_i$ whenever $d_{i+1}=d_i$.  Thus by Lemma \ref{lem:AA} we have $A(2048n)\ge F(512n)\ge F(n)$ and so we obtain the result in this case.

\vskip 2mm
\emph{Case II.} $[n,512n]\cap Y$ is empty.
\vskip 2mm
Since $Y$ contains $X$ we then have $[n,512n]\cap X$ is necessarily empty. In this case we pick the largest $p<n$ with $p\in X$.  Since $X$ is infinite, for large enough $n$ we will have that $p\ge 2^{19}$ and we will be able to apply the lemmas from the preceding section.

Then if $d:=d_p$ the we have $d_j=d$ for $j=p,\ldots ,512n$.  Then we pick the largest $t$ such that $64\cdot 2^t p\le 2^{16} n$.
Then by maximality of $t$ we have $64\cdot 2^{t+1} p>2^{16} n$ and so $2^t p>2^9 n$.

Now we have by Lemma \ref{growth} that
\begin{align*}
A(2^{16} n)-A(p) &=
-a(p) + \sum_{j=p}^{2^{16}n} a(j) \\
& \ge  -a(p)+ \sum_{j=0}^{t-1} \sum_{i=64\cdot 2^j p}^{64\cdot 2^{j+1} p -1} \#T(d,i)\\
&\ge -a(p) + \sum_{j=0}^{t-1} 64\cdot 2^j p \cdot \#T(d,64\cdot 2^j p) \\
&\ge  -a(p) + \sum_{j=0}^{t-1} 64 \cdot 2^j p \cdot \#T(d-1, p)^{2^j} \\
&\ge  -a(p) + \sum_{j=0}^{t-1} 64 \cdot 2^j p \cdot f(p)^{2^j},
\end{align*}
where the last inequality follows from Equation (\ref{eq:pinX}).

Now we recall that $f(i)\le f(p)^{2^j}$ for $2^{j-1} p\le i\le 2^j p$ for $j\ge 1$ by Remark \ref{rem}.
Then $$\sum_{i=2^{j-1}p}^{2^jp-1} f(i) \le 2^j p f(p)^{2^j},$$ for $j\ge 1$ so
  \begin{align*}
A(2^{16} n)-A(p) &\ge  -a(p) + \sum_{j=0}^{t-1} 64 \cdot 2^j p \cdot f(p)^{2^j} \\
&\ge  -a(p) + 64\sum_{j=1}^{t-1}  \sum_{i=2^{j-1}p}^{2^jp -1} f(i) \\
&= -a(p) + 64( F(2^{t-1} p-1)-F(p-1))\\
&\ge  -a(p) + 64 (F(128n) - F(p)).\\ 
\end{align*}
So 
\begin{equation}
\label{eq: 16n}
A(2^{16} n)  \ge A(p-1)+64(F(128 n)-F(p)).
\end{equation}
Now by construction, $p\in X$ so we have
$F(p)\le \sum_{i< p} \#T(d_i, i) + \#T(d_p-1,p)$ and we have
$F(128n) \ge \sum_{i\le 128n} \# T(d_i,i)$ and so
we see
\begin{equation}\label{eq: 16n2}
F(128n)-F(p) \ge \left(\sum_{i=p}^{128n} \#T(d,i)\right) - \#T(d-1,p).
\end{equation}
We also have $A(p-1)-F(p)\ge -\#T(d-1,p)$ and 
so by Equations (\ref{eq: 16n}) and (\ref{eq: 16n2}) we have
 \begin{align*}
A(2^{16} n) & \ge  A(p-1) + 63(F(128n)-F(p)) + F(128n)-F(p) \\
&\ge (A(p-1)-F(p)) + F(128n) + 63\left(\sum_{i=p}^{128n} \#T(d,i)  - \#T(d-1,p)\right)\\
&\ge  -64\#T(d-1,p)  + F(128n) + 63\sum_{i=p}^{128n} \#T(d,i)\\
&\ge  F(128n) -64 \#T(d-1,p)+63\#T(d,128n).\end{align*}
Then applying Lemma \ref{lem:512}, we have 
$$\#T(d,128n)\ge \#T(d,128p) \ge 1024 p\#T(d-1,2p)^2\ge 64\#T(d-1,p),$$ and so in particular
$63 \#T(d,128n)\ge 64 \#T(d-1,p)$.  Hence we have
$$A(2^{16} n) \ge F(128n) \ge F(n),$$
and so we see that $F(n)$ is asymptotically dominated by $A(n)$. 
This completes the proof.
\end{proof}
An immediate corollary of Theorem \ref{thm:asym} and the construction given before this theorem is the following result.
\begin{corollary}
\label{cor:A}
Let $f:\mathbb{N}\to \mathbb{N}$ be a sequence with the following properties:
\begin{enumerate}
\item[(1)] $f(m)\le f(n)^2$ for $m\in \{n,n+1,\ldots ,2n\}$;
\item[(2)] $f(n)\ge n+1$ for all $n$.
\end{enumerate}
Then the global counting function of $f(n)$ is asymptotically equivalent to the growth function of a hereditary language.
\end{corollary}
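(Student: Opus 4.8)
The plan is to read this off from the construction of Section~4 together with Theorem~\ref{thm:asym}; the only real content is checking that the hypotheses line up and converting the ``for $n$ sufficiently large'' bound of Theorem~\ref{thm:asym} into an honest asymptotic equivalence. First I would note that conditions (1) and (2) in the statement are exactly the two standing assumptions on $f$ under which the construction preceding Theorem~\ref{thm:asym} is carried out, so that construction applies to the given $f$ verbatim. Before invoking it I would clear away the degenerate case exactly as there: if the sequence $d_n$ produced by the construction is eventually constant then $F(n)\sim 2^n$, which is the growth function of the free monoid on two generators and hence of a hereditary language, so we may assume $d_n\to\infty$; and after the harmless normalization $F(0)=1$, $F(1)=3$ we may run the full construction.

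The construction then provides a monomial (two-sided) ideal $I$ of $k\{x,y\}$ such that $R=k\{x,y\}/I$ is the semigroup algebra of a monomial semigroup on the two-letter alphabet $\{x,y\}$---so $R$ corresponds to a hereditary language $L_W(\{x,y\})$---and, by Equation~(\ref{eq:an}), the degree-$n$ component of $R$ has dimension $a(n)$, whence the growth function of this hereditary language is $A(n)=a(0)+\cdots+a(n)$. Since $A(n)\le F(n)$ for all $n$ by construction, we get $A\preceq F$ with constant $1$. Theorem~\ref{thm:asym} supplies the reverse: $F(n)\le A(2^{16}n)$ for all $n\ge n_0$, say.

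It remains to promote this to $F\preceq A$ as demanded by the definition, i.e.\ to an inequality $F(n)\le A(Cn)$ valid for \emph{every} $n$. Here I would use that $a(m)\ge m+1$ for all $m$, so $A$ is weakly increasing and tends to infinity; pick $C'$ with $A(C')\ge F(n_0)$ and set $C=\max(2^{16},C')$. For $n\ge n_0$ the bound $F(n)\le A(2^{16}n)\le A(Cn)$ is Theorem~\ref{thm:asym} plus monotonicity, and for $1\le n<n_0$ we have $F(n)\le F(n_0)\le A(C')\le A(Cn)$. Combining $A\preceq F$ and $F\preceq A$ yields $F\sim A$, so $F$ is asymptotically equivalent to the growth function of the hereditary language just constructed, which is the assertion. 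I do not anticipate a genuine obstacle; the only places that need a moment's care are matching the hypotheses to the standing assumptions of the Section~4 construction and the trivial bookkeeping in the last paragraph that covers the finitely many small values of $n$.
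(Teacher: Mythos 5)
Your proposal is correct and follows the same route as the paper, which simply deduces the corollary as an immediate consequence of the Section~4 construction (giving $A(n)\le F(n)$ and a hereditary language with growth $A$) together with Theorem~\ref{thm:asym}; your extra bookkeeping for small $n$ is routine and harmless.
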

We are now able to complete the proof of our main result.
\begin{proof}[Proof of Theorem \ref{thm: main}]
The fact that the growth function of an algebra is either eventually constant or linear or satisfies conditions (i) and (ii) in the statement of the theorem follows from Bergman's gap theorem \cite{Berg} and Proposition \ref{prop: reduction}.  It is well-known that every constant function and the function $G(n)=n$ are asymptotically equivalent to growth functions of hereditary languages (for example, the function $G(n)=n$ is asymptotically equivalent to the growth function of the hereditary language $t^*$, the free monoid on a single-letter alphabet; the constant function $G(n)=C$ is asymptotically equivalent to the growth function of the hereditary language corresponding to a finite semigroup).  Hence it suffices to consider the case when $G(n)$ satisfies conditions (i) and (ii), and so Corollary \ref{cor:A} gives that there is a hereditary language whose growth function is equivalent to $G(n)$ in this case.  The result follows.
\end{proof}

\end{document}